\newtheorem{theorem}{Theorem}[section]
\newtheorem{lemma}[theorem]{Lemma}
\newtheorem{proposition}[theorem]{Proposition}
\newtheorem{corollary}[theorem]{Corollary}
\theoremstyle{definition}
\newtheorem{definition}[theorem]{Definition}
\newtheorem{example}[theorem]{Example}
\newtheorem{problem}[theorem]{Problem}
\theoremstyle{remark}
\newtheorem{remark}[theorem]{Remark}
\numberwithin{equation}{section}
\newcommand{\Z}{\mathbb{Z}}
\newcommand{\Q}{\mathbb{Q}}
\newcommand{\R}{\mathbb{R}}
\newcommand{\C}{\mathbb{C}}
\newcommand{\A}{\mathbb{A}}
\newcommand{\bb}{\mathbf{b}}
\newcommand{\cc}{\mathbf{c}}
\renewcommand{\Im}{\operatorname{Im}}
\newcommand{\Hom}{\mathrm{Hom}}
\newcommand{\irr}{\mathrm{irr}}
\newcommand{\SU}{\mathit{SU}}
\newcommand{\SL}{\mathit{SL}}
\newcommand{\RT}{\mathit{RT}}
\newcommand{\res}{\mathrm{res}}
\newcommand{\tr}{\operatorname{tr}}
\newcommand{\ang}[1]{\langle#1\rangle}
\begin{document}

\title[An algebraic property of Reidemeister torsion]{An algebraic property of Reidemeister torsion}

\author{Teruaki Kitano}
\address{Department of Information Systems Science, Faculty of Science and Engineering, Soka University \\
Tangi-cho 1-236, Hachioji, Tokyo 192-8577 \\
Japan}
\email{kitano@soka.ac.jp}

\author{Yuta Nozaki}
\address{
Graduate School of Advanced Science and Engineering, Hiroshima University \\
1-3-1 Kagamiyama, Higashi-Hiroshima City, Hiroshima, 739-8526 \\
Japan}
\email{nozakiy@hiroshima-u.ac.jp}

\subjclass[2020]{Primary 57K31, 57Q10, Secondary 11R04, 13P15, 57M05}

\keywords{Reidemeister torsion, algebraic integer, resultant, Chebyshev polynomial, character variety, $A$-polynomial}

\maketitle

\begin{abstract}
For a 3-manifold $M$ and an acyclic $\mathit{SL}(2,\mathbb{C})$-representation $\rho$ of its fundamental group, the $\mathit{SL}(2,\mathbb{C})$-Reidemeister torsion $\tau_\rho(M) \in \mathbb{C}^\times$ is defined.
If there are only finitely many conjugacy classes of irreducible representations, then the Reidemeister torsions are known to be algebraic numbers.
Furthermore, we prove that the Reidemeister torsions are not only algebraic numbers but also algebraic integers for most Seifert fibered spaces and infinitely many hyperbolic 3-manifolds.
Also, for a knot exterior $E(K)$, we discuss the behavior of $\tau_\rho(E(K))$ when the restriction of $\rho$ to the boundary torus is fixed.
\end{abstract}

\setcounter{tocdepth}{1}
\tableofcontents

\section{Introduction}
\label{sec:Intro}
Let $M$ be a connected compact $n$-manifold and let $\rho$ be an acyclic $\SL(2,\C)$-representation, namely the chain complex $C_\ast(M;\C^2_\rho)$ is acyclic.
Then the $\SL(2,\C)$-Reidemeister torsion $\tau_\rho(M) \in \C^\times$ is defined to be the alternative product of determinants (see Section~\ref{subsec:Rtorsion}).
When $\rho$ is not acyclic, we set $\tau_\rho(M)=0$.
Then $\tau_\rho(M)$ defines a $\C$-valued function on the $\SL(2,\C)$-representation variety $R(M)=\Hom(\pi_1(M),\SL(2,\C))$, which factors through the $\SL(2,\C)$-character variety $X(M)$ of $M$.
In this paper, we mainly consider a 3-manifold $M$ and the subspace $X^\irr(M)$ of irreducible characters.
Under the assumption that $X^\irr(M)$ is a finite set, Johnson~\cite{Joh88} defined the torsion polynomial $\sigma_M(t)$ of $M$ by
\[
\sigma_M(t) = \prod_{[\rho] \in X^\irr(M),\ \text{acyclic}}(t-\tau_\rho(M)) \in \C[t]. %
\]
He mentioned that $\sigma_M(t)$ lies in $\Q[t]$ by considering the action of a Galois group.
As a consequence, $\tau_\rho(M)$ is an algebraic number when $X^\irr(M)$ is a finite set.
The first author and Tran~\cite{KiTr} described the torsion polynomial of the Brieskorn homology 3-sphere $\Sigma(p,q,r)$ in terms of the normalized Chebyshev polynomial (see also \cite{Kit17}).
Moreover, we show that $\sigma_{\Sigma(p,q,r)}(t)$ has integral coefficients in Section~\ref{sec:Seifert}.
In other words, $\tau_\rho(\Sigma(p,q,r))$ is not only an algebraic number but also an algebraic integer for every irreducible representation $\rho$.
Recall that $\alpha \in \C$ is called an \emph{algebraic integer} if there is a monic polynomial over $\Z$ such that $\alpha$ is a root of the polynomial.
As a consequence, for the homology 3-sphere obtained by Dehn surgery along the (right-handed) $(p,q)$-torus knot $T_{p,q}$, its Reidemeister torsion $\tau_\rho(S^3_{-1/n}(T_{p,q}))$ is also an algebraic integer since $\Sigma(p,q,pqn+1) = S^3_{-1/n}(T_{p,q})$ for $n>0$.

Also, such a phenomenon was numerically observed for the figure-eight knot $4_1$ in \cite{Kit16N}.
It is worth mentioning that while the Brieskorn homology 3-spheres are not hyperbolic, $S^3_{p/q}(4_1)$ is hyperbolic unless $p/q$ is an integer with $|p/q| \leq 4$ or $p/q=\infty$.
In this paper, we rigorously prove the observation in \cite{Kit16N}.

\begin{theorem}
\label{thm:4_1}
Let $p,q$ be non-zero integers and either $p$ or $q$ is $1$.
For an $\SL(2,\C)$-representation $\rho$ of $\pi_1(S^3_{p/q}(4_1))$, the $\SL(2,\C)$-Reidemeister torsion $\tau_\rho(S^3_{p/q}(4_1))$ is an algebraic integer.
\end{theorem}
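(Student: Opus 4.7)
The plan is to combine the Dehn-surgery formula for Reidemeister torsion with the explicit shape of the $A$-polynomial of $4_1$ and a monicity analysis of a certain eliminant. Let $E=E(4_1)$ with meridian-longitude pair $(\mu,\lambda)$, and for an irreducible $\bar\rho\colon\pi_1(E)\to\SL(2,\C)$ let $(\ell,m)$ denote common eigenvalues of $\bar\rho(\lambda),\bar\rho(\mu)$. Then
\[
A_{4_1}(\ell,m) = \ell^2 m^4 + \ell(-m^8 + m^6 + 2m^4 + m^2 - 1) + m^4 \in \Z[\ell,m],
\]
and each irreducible representation of $\pi_1(S^3_{p/q}(4_1))$ corresponds to an intersection point of the affine curves $\{A_{4_1}=0\}$ and $\{m^p \ell^q = \pm 1\}$ inside $(\C^\times)^2$.

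Using Fox calculus on the two-bridge presentation of $\pi_1(E)$ together with the Mayer--Vietoris surgery formula
\[
\tau_\rho(S^3_{p/q}(4_1)) = \tau_{\bar\rho}(E)/\det(\bar\rho(\gamma')-I),
\]
where $\gamma'$ completes $\mu^p\lambda^q$ to a basis of $\pi_1(\partial E)$, write $\tau_\rho(S^3_{p/q}(4_1))$ as the value of a rational function $F(\ell,m)/G(\ell,m)$ with $F,G\in\Z[\ell^{\pm 1},m^{\pm 1}]$. The torsion polynomial $\sigma_{S^3_{p/q}(4_1)}(t)=\prod_i\bigl(t-F(\ell_i,m_i)/G(\ell_i,m_i)\bigr)$ is a priori in $\Q[t]$; one wants to promote this to $\Z[t]$.

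The key observation is a double-monicity check. Substituting $m=\epsilon\ell^{-q}$ (when $p=1$) or $\ell=\epsilon m^{-p}$ (when $q=1$), with $\epsilon\in\{\pm 1\}$, into $A_{4_1}=0$ and clearing Laurent denominators produces a univariate polynomial whose leading \emph{and} constant coefficients both have absolute value $1$. This follows from the Newton polygon of $A_{4_1}$, whose corner monomials $\ell^2m^4$, $\ell m^8$, $\ell$, and $m^4$ all carry coefficient $\pm 1$, together with the hypothesis $p=1$ or $q=1$ ensuring the extreme exponents of the eliminant are attained by single corners without cancellation. Consequently each $\ell_i,m_i$ at an intersection point is a unit algebraic integer, and the symmetric products $\prod_i\ell_i$, $\prod_i m_i$ equal $\pm 1$. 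Computing $\sigma_{S^3_{p/q}(4_1)}(t)\cdot\prod_i G(\ell_i,m_i)$ as a resultant of $A_{4_1}(\ell,m)$, $m^p\ell^q\mp 1$, and $tG(\ell,m)-F(\ell,m)$ yields an integer polynomial in $t$; the unit-integer control on $(\ell_i,m_i)$ then reduces $\prod_i G(\ell_i,m_i)$ to $\pm 1$. Hence $\sigma_{S^3_{p/q}(4_1)}(t)\in\Z[t]$ is monic, and every value $\tau_\rho(S^3_{p/q}(4_1))$ is an algebraic integer.

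The main obstacle is the double-monicity verification above; this is precisely where the hypothesis $p=1$ or $q=1$ is used, and for general slopes it fails, producing torsions that are algebraic numbers but not algebraic integers. A secondary technical matter is controlling the product $\prod_i G(\ell_i,m_i)$: the factor $G$ of the denominator consists of monomials in $\ell,m$ together with factors of the form $\ell^a m^b - 1$ coming from Fox calculus and the surgery formula, and one must verify via a brief case analysis that on the surgery curve these combine to have norm exactly $\pm 1$, so that no rational prime survives in the final integer polynomial.
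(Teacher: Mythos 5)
There is a genuine gap at the final step, where you claim that the denominator product $\prod_i G(\ell_i,m_i)$ reduces to $\pm 1$ because the factors $\ell^a m^b-1$ ``combine to have norm exactly $\pm 1$'' on the surgery curve. This is false, and the failure is exactly where the real work of the theorem lies. Take $q=1$ and $p=4$: the surgery formula gives $\tau_\rho=2(s^2-s+1)/(s-1)^2$, the eliminant $A_{4_1}(s^{-4},s)=s^{-2}(s^2+1)^2$ forces $s=\pm i$, and $\prod_i(s_i-1)^2=\bigl((i-1)(-i-1)\bigr)^2=4$, not $\pm 1$; indeed $(\pm i-1)^{-1}$ has minimal polynomial $2x^2+2x+1$ and is \emph{not} an algebraic integer. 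More generally $\res_s\bigl((s-1)^2,A_{4_1}(s^{-p},s)\bigr)=\pm A_{4_1}(1,1)^2=\pm 16$, so the leading coefficient of your resultant in $t$ is $\pm 16$ rather than $\pm 1$. The theorem is still true because the \emph{entire} resultant turns out to be divisible by $16$ (by $4(2\tau-3)^2$ when $p$ is odd), so that dividing by the leading coefficient still yields a monic polynomial over $\Z$; but establishing that divisibility is a nontrivial argument on the Sylvester matrix (parity of entries, the factor $(s+1)^2\mid A_{4_1}(s^{-p},s)$ for odd $p$, and evaluation at $s=\pm i$), not a consequence of unit-integrality of $\ell_i,m_i$.

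The case $p=1$ has an analogous wrinkle: with $f(L)=A_{4_1}(L,L^{-q})$ one has $f(1)=A_{4_1}(1,1)=4\neq\pm 1$, so the naive norm of $L-1$ over the eliminant is $4$. Here the rescue is different: $A_{4_1}(L,\pm 1)=(L+1)^2$ forces a spurious double root of $f$ at $L=-1$ which does not correspond to a representation of the surgered manifold, and after stripping $(L+1)^2$ the cofactor $h$ satisfies $h(1)=4/4=1$, whence $(L_0-1)^{-1}\in\A$ and $\tau_\rho$ is a product of algebraic integers. So the two cases require two genuinely different repairs, neither of which is the ``norm $\pm 1$'' assertion. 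Your Newton-polygon/double-monicity step correctly yields that $\ell_i$ and $m_i$ are units in the ring of algebraic integers (this matches the paper's Lemmas on the $A$-polynomial of $2$-bridge knots), but that alone controls only the monomial part of $G$, not the factors $(s^{p'}L^{q'}-1)^2$, and it is those factors that carry the prime $2$.
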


\begin{figure}[h]
 \centering
 \includegraphics[width=0.7\textwidth]{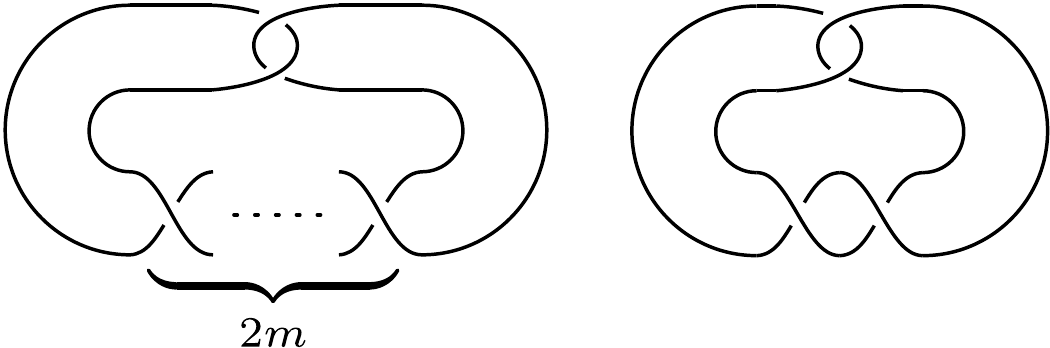}
 \caption{The twist knot $J(2,2m)$ and $J(2,2)=3_1$, where $2m$ half-twists mean $|2m|$ negative half-twists if $m<0$.}
 \label{fig:twist}
\end{figure}

Furthermore, an analogous result holds for the complement of a twist knot $J(2,2m)$ in Figure~\ref{fig:twist} ($m \in \Z$), which suggests that the Reidemeister torsions of the closed 3-manifold $S^3_{p/q}(J(2,2m))$ might be algebraic integers as observed in Examples~\ref{ex:2/3} and \ref{ex:5_2}.
Throughout this paper, for a knot $K$, let $E(K)$ denote the complement of an open tubular neighborhood of $K$.
We also write $\mu$ and $\lambda$ for a meridian and a preferred longitude of $K$, respectively.

\begin{theorem}
\label{thm:twist_knot}
Let $K$ be a twist knot $J(2,2m)$ and let $p=\pm 1$ and $q$ an odd integer.
For an irreducible $\SL(2,\C)$-representation $\rho$ of $\pi_1(E(K))$ satisfying $\rho(\mu^p\lambda^q)=I_2$, the $\SL(2,\C)$-Reidemeister torsion $\tau_\rho(E(K))$ is an algebraic integer.
\end{theorem}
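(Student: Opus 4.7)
The plan is to adapt the resultant/elimination strategy behind Theorem~\ref{thm:4_1} to the family of twist knots $J(2,2m)$, realizing $\tau_\rho(E(K))$ as a root of a monic polynomial in $\Z[t]$.

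First I would use the classical two-bridge presentation $\pi_1(E(K)) = \langle a, b \mid wa = bw\rangle$ with $a, b$ meridians and $w$ an explicit word depending on $m$, and take trace coordinates $x = \tr\rho(a)$ and $y = \tr\rho(ab^{-1})$. The irreducible character locus is cut out by the Riley polynomial $\phi_K(x,y) \in \Z[x,y]$, which is monic in $y$. A Fox-calculus computation expresses $\tau_\rho(E(K))$ as a polynomial $T_K(x,y) \in \Z[x,y]$ on this locus, after the standard cancellation of a $\det(\rho(\mu) - I_2)$-type factor.

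Next, the slope condition $\rho(\mu^p\lambda^q) = I_2$ needs to be turned into polynomial equations. Write $\lambda$ as a word in $a, b$ (a suitable product of $w$ and its reverse, normalized by a power of $\mu$), so that $\rho(\lambda)$ becomes a matrix polynomial in $\rho(a), \rho(b)$. Since $\rho(\mu)$ and $\rho(\lambda)$ commute, the Chebyshev identity $\gamma^n = S_{n-1}(\tr\gamma)\,\gamma - S_{n-2}(\tr\gamma)\, I_2$ expands $\rho(\mu^p\lambda^q)$, and equating its entries to those of $I_2$ yields polynomial equations $g_1, g_2 \in \Z[x,y]$. The parity hypothesis ($p = \pm 1$, $q$ odd) is used precisely to force the leading terms of $S_{q-1}$ and $S_{q-2}$ to cooperate so that, modulo $\phi_K$, the equations $g_1, g_2$ are monic in $x$ with integer coefficients.

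The common locus of $\phi_K, g_1, g_2$ is then zero-dimensional, and by the monicity above the quotient $R := \Z[x,y]/(\phi_K, g_1, g_2)$ is a finitely generated free $\Z$-module. The characteristic polynomial of multiplication by $T_K(x,y)$ on $R$ is a monic polynomial $P_{p/q}(t) \in \Z[t]$ whose roots include every $\tau_\rho(E(K))$ meeting the hypotheses, which concludes the proof. The main obstacle is the monicity claim in the preceding paragraph: verifying that, after reducing modulo $\phi_K$, the equations extracted from $\rho(\mu^p\lambda^q) = I_2$ are monic in $x$ requires a careful leading-coefficient analysis of the Chebyshev expansion of $\rho(\lambda)^q$, in which the oddness of $q$ and the restriction $p = \pm 1$ are decisive; once that is in place, the remainder is formal commutative algebra parallel to the $4_1$ case.
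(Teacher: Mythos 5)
There is a genuine gap, and it sits exactly where you wave your hands: the claim that $\tau_\rho(E(K))$ becomes a polynomial $T_K(x,y)\in\Z[x,y]$ ``after the standard cancellation of a $\det(\rho(\mu)-I_2)$-type factor.'' No such cancellation occurs for general twist knots. With the Riley parametrization one has $\det(\rho_{s,t}(\mu)-I_2)=-(s-1)^2/s$, and the torsion has the form $-s(s-1)^{-2}F(s,t)$ with $F\in\Z[s^{\pm1},t]$; the denominator survives (for $4_1$ it happens to cancel, but already for $5_2$ one gets $\tau_{\rho_{i,t}}(E(5_2))=(-t^3-3t^2+2t+9)/2$, where the $1/2$ is precisely $-s(s-1)^{-2}$ at $s=i$). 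Since $s_0,t_0$ and $F(s_0,t_0)$ are algebraic integers (that part of your plan is fine and matches Lemmas~\ref{lem:M_in_A} and \ref{lem:s0t0}, where the even boundary slopes of $2$-bridge knots make the elimination monic), the entire content of the theorem is showing that $(s_0-1)^{-1}\in\A$. Your characteristic-polynomial argument on $R=\Z[x,y]/(\phi_K,g_1,g_2)$ cannot see this: multiplication by a function with a pole along $\tr\rho(\mu)=2$ is not an endomorphism of a free $\Z$-module generated by the coordinate functions, so the construction does not produce a monic annihilating polynomial for $\tau_\rho$ itself.

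The paper's proof of the missing step is genuinely nontrivial and specific to twist knots. By Lemma~\ref{lem:alpha-1} it suffices to exhibit a monic $f\in\Z[s^{\pm1}]$ with $f(s_0)=0$ and $f(1)=\pm1$. The candidate $\res_L(A_K(L,s),sL^q-1)$ is monic but takes the value $\pm2^{d(m)}\cdot(\pm 2)^{d(m)}$-type quantities at $s=1$, so one must first divide out $(s+1)^{d(m)}$; proving this factor divides the resultant requires the divisibility of $\frac{1}{n!}\partial^nA_K/\partial L^n(-1,M)$ by $(M^2-1)^{d(m)-n}$ (Lemma~\ref{lem:diff_of_A}, proved by induction via the Hoste--Shanahan recursion \cite{HoSh04}) fed into a general resultant lemma (Lemma~\ref{lem:factor_of_res}), and then the evaluation $A_K(L,1)=\pm(L+1)^{d(m)}$ gives $f(1)=\pm1$. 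This is also where the oddness of $q$ is actually used ($\res_L(L+1,L^q-1)=\pm2$ only for $q$ odd), not in the monicity of the elimination as you suggest; the remark following the paper's proof shows that for even $q$ the quantity $(s_0-1)^{-1}$ can genuinely fail to be an algebraic integer, so this obstruction cannot be bypassed by a cleaner commutative-algebra setup.
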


Note that $J(2,2)$ is a trefoil knot $3_1=T_{-3,2}$, and $\tau_\rho(E(3_1))$ was computed by Johnson~\cite{Joh88}.
In fact, he expressed $\tau_\rho(E(T_{p,q}))$ in terms of cosine functions.
In Section~\ref{sec:Seifert}, we see that these are algebraic integers as well.
We give the proofs of Theorems~\ref{thm:4_1} and \ref{thm:twist_knot} in Section~\ref{sec:algebraic_integer}, which is based on the resultant of polynomials and the $A$-polynomial of knots introduced by Cooper, Culler, Gillet, Long, and Shalen~\cite{CCGLS94}.

Here we refer to some related results.
First recall that the Reidemeister torsion $\tau_\rho(E(K))$ coincides with $\Delta_{K,\rho}(1)$, where $\Delta_{K,\rho}(t) \in \C[t^{\pm 1}]$ denotes the twisted Alexander polynomial associated with $\rho$ (see \cite{Wad94}, \cite{Kit96T}).
When $K$ is hyperbolic and $\rho$ is the holonomy representation, Dunfield, Friedl, and Jackson~\cite{DFJ12} called $\Delta_{K,\rho}(t)$ the hyperbolic torsion polynomial.
They observed that the coefficients of hyperbolic torsion polynomial are often algebraic integers.
This observation and Theorems~\ref{thm:4_1} and \ref{thm:twist_knot} suggest that, under some conditions, Reidemeister torsions have an interesting algebraic property that is quite non-trivial from its definition.
Recently, Yoon~\cite{Yoo20B} proved a vanishing identity on the adjoint Reidemeister torsions of 2-bridge knots.
This result also implies that Reidemeister torsions are imposed on strong algebraic restrictions.
Moreover, we observe that the maximal $\SL(2,\C)$-Reidemeister torsion in absolute value is often a Perron number in Examples~\ref{ex:2/3} and \ref{ex:5_2}.
Here, a real algebraic number $\alpha>1$ is called a \emph{Perron number} if it is larger than the absolute values of the Galois conjugates of $\alpha$.
Such numbers appear, for example, in the study of the stretch factor of pseudo-Anosov homeomorphisms (see \cite[Section~14.2.1]{FaMa12}).

\begin{figure}[h]
 \centering
 \includegraphics[width=0.7\textwidth]{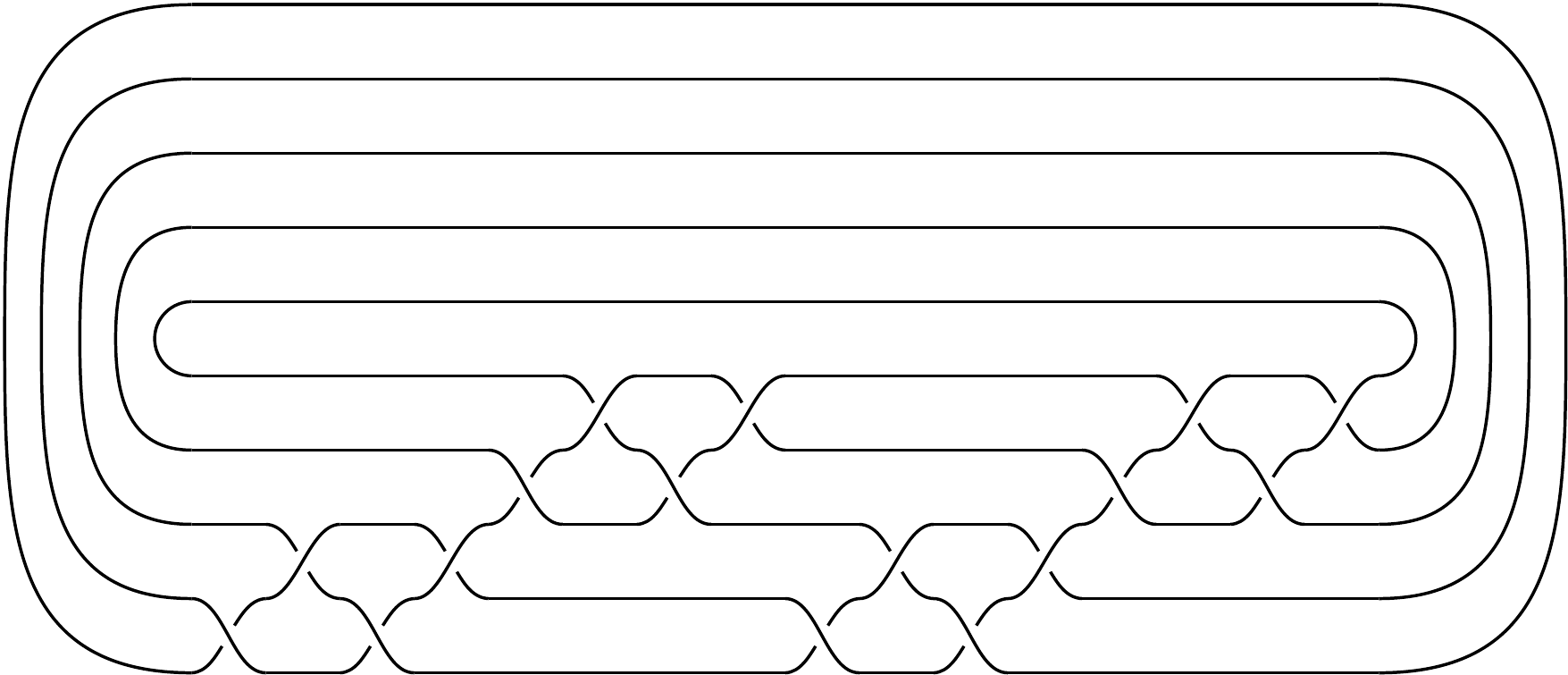}
 \caption{The knot $K_0$.}
 \label{fig:K0}
\end{figure}

We now turn our attention to the case where $X^\irr(M)$ is an infinite set.
For instance, if $M=E(K)$ for a knot $K$ in $S^3$, then $X^\irr(M)$ has a positive dimension.
The first author~\cite{Kit94E} gave the formula $\tau_\rho(E(4_1)) = 2-2\tr\rho(\mu)$, where $\tr\rho(\mu)\neq 2$ for a meridian $\mu$.
Note that $\tau_\rho(E(4_1))$ is no longer an algebraic integer since $\tr\rho(\mu)$ can vary continuously.
On the other hand, $\tau_\rho(E(4_1))$ is determined by the restriction $r(\rho)$ of $\rho$ to the boundary torus.
Here, when $\partial M\neq \emptyset$, we denote by $r$ the regular map $X(M) \to X(\partial M)$ between algebraic sets induced by the inclusion $\partial M \hookrightarrow M$.
It is natural to ask whether the function $\tau_\rho(M)$ on $X(M)$ varies continuously while $r(\rho)$ is fixed.
If $K$ is a 2-bridge knot, then $\dim_\C X(E(K))=1$ and $r^{-1}(\rho_0)$ is a finite set, and thus $\tau_\rho(M)$ cannot vary continuously.
We need to consider the case $\dim_\C X(E(K))\geq 2$.
An easy example of such a knot is the connected sum $K=K_1\sharp K_2$ of non-trivial knots $K_1$ and $K_2$.
However, if $r(\rho)$ is fixed, then $\tau_\rho(E(K))$ depends only on $\tau_\rho(E(K_1))$ and $\tau_\rho(E(K_2))$ because of the multiplicativity of the Reidemeister torsions.
In this paper, we focus on the knot $K_0$ in Figure~\ref{fig:K0}, which is obtained from $4_1\sharp 4_1$ by a construction given by Cooper and Long~\cite{CoLo96}.
Then we can find a family $C$ of representations of $\pi_1(E(K_0))$ and obtain the following result.

\begin{theorem}
\label{thm:vary_conti}
Let $\rho' \in C$.
Then the function $\tau_\rho(E(K_0))$ varies continuously on the preimage $r^{-1}(r(\rho')) \subset X(E(K_0))$.
\end{theorem}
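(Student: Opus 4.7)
The plan is to construct the family $C$ explicitly as a bending deformation and then track the Reidemeister torsion through the resulting surface decomposition of $E(K_0)$. Cooper and Long's construction equips $E(K_0)$ with a closed essential surface $S$ (inherited from the decomposing sphere in $4_1\sharp 4_1$) along which $E(K_0)$ decomposes as $N_1\cup_S N_2$, with the peripheral torus contained in, say, $N_1$. Given a base representation $\rho'$ whose restriction to $\pi_1(S)$ has positive-dimensional centralizer in $\SL(2,\C)$, pick $\xi\in\liesl(2,\C)$ commuting with $\rho'(\pi_1(S))$, and define the bending family by
\[
\rho_s(g) = \begin{cases} \rho'(g) & g\in\pi_1(N_1),\\ \exp(s\xi)\,\rho'(g)\,\exp(-s\xi) & g\in\pi_1(N_2). \end{cases}
\]
Van Kampen's theorem ensures these extend consistently to representations of $\pi_1(E(K_0))$, and $r(\rho_s)=r(\rho')$ holds since $\mu,\lambda\in\pi_1(N_1)$. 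Thus the image of this family in $X(E(K_0))$ lies in $r^{-1}(r(\rho'))$, and I would take $C$ to be this bending curve (or its closure).

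Next I would compute $\tau_{\rho_s}(E(K_0))$ via the multiplicativity of Reidemeister torsion applied to the Mayer--Vietoris triple $(E(K_0);N_1,N_2)$ with $\C^2_{\rho_s}$-coefficients. Since $\rho_s|_{N_1}$ is independent of $s$ and $\rho_s|_{N_2}$ is conjugate to the $s$-independent representation $\rho'|_{N_2}$, the individual piece contributions are either constant or depend on $s$ only through an analytic change-of-basis factor. The genuine $s$-dependence is carried by the connecting maps of Mayer--Vietoris, which involve the matrix $\exp(s\xi)$ and hence are rational in $s$. Assembling these pieces exhibits $\tau_{\rho_s}(E(K_0))$ as a rational function of $s$ on the open locus where it is defined, establishing continuity along the bending curve. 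A direct computation using the building-block formula $\tau_\rho(E(4_1))=2-2\tr\rho(\mu)$ from \cite{Kit94E} applied to the two $4_1$ summands, together with a trace on a loop crossing $S$ non-trivially, should confirm that this function is genuinely non-constant in $s$.

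The main obstacle is that the surface $S$ is closed of genus $\geq 2$, so $\C^2_{\rho_s}$-coefficients generally give non-acyclic twisted chain complexes on $S$ and on the pieces $N_i$. In that case the torsions $\tau_{\rho_s|_{N_i}}(N_i)$ and $\tau_{\rho_s|_S}(S)$ are not defined in the strict acyclic sense, so I would use the Milnor--Turaev refinement of multiplicativity for based (not necessarily acyclic) chain complexes, in which the torsion of the long exact sequence of twisted homologies enters as an additional correction term. The essential observation is that every factor in this refinement depends analytically on the entries of $\exp(s\xi)$, so the overall quantity $\tau_{\rho_s}(E(K_0))$ remains a rational function of $s$ wherever $\rho_s$ itself is acyclic. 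Careful bookkeeping of bases through the homology long exact sequence will be the bulk of the technical work, but no step in it destroys continuity, which yields the conclusion once $C$ is identified with the bending curve above.
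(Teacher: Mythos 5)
Your framework---realize $C$ as a bending family that fixes the peripheral restriction, then argue that the torsion is an analytic function of the bending parameter---is a reasonable reading of the Cooper--Long construction, and the continuity half of your argument is essentially automatic (indeed simpler than you make it: from a finite presentation, $\tau_\rho(E(K_0))$ is a ratio of polynomials in the matrix entries of $\rho$ on the generators, so it is continuous wherever defined, with no need for the Milnor--Turaev bookkeeping). The genuine gap is the other half: ``varies continuously'' here means the function is \emph{non-constant} on the positive-dimensional fiber $r^{-1}(r(\rho'))$ --- this is what Corollary~\ref{cor:RT_infinite} consumes --- and your proposal does not establish it. Worse, the route you sketch for it cannot work: you propose to detect non-constancy from the building-block values $\tau_\rho(E(4_1))=2-2\tr\rho(\mu)$ on the two summands, but conjugating one summand by $P_u$ (or $\exp(s\xi)$) changes neither summand's torsion, and the paper itself points out in the introduction that for a connected sum $K_1\sharp K_2$ multiplicativity forces $\tau_\rho(E(K_1\sharp K_2))$ to be constant on fibers of $r$. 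The whole point of passing to the branched double cover $K_0$ is that this constancy breaks, and detecting that requires computing the gluing/connecting data exactly --- which is the entire content of the theorem, not a detail.

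The paper settles this by brute force: from a $16$-generator Wirtinger presentation it computes (by Fox calculus, via Mathematica)
\[
\tau_{\rho_{s,t,u}}(E(K_0))=\frac{f_2(s,t)u^2+f_0(s,t)+f_{-2}(s,t)u^{-2}}{s^{70}(s-1)^{19}(s+1)^{18}},
\]
where $u$ is the bending parameter, and then proves non-constancy in $u$ by a resultant argument showing that $f_2(s,t)$ and $f_{-2}(s,t)$ have no common zero on the locus $\phi(s,t)=0$, $s\neq 0,\pm1$, $t\neq 0$. Your plan would also need an analogous explicit computation of the Mayer--Vietoris connecting maps on $H_*(S;\C^2_{\rho_s})$ for a closed surface of genus at least $2$ (a nonzero, high-dimensional twisted homology group), together with a verification that the resulting $s$-dependence does not cancel; ``careful bookkeeping \dots\ no step destroys continuity'' addresses only the part that was never in doubt. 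A secondary issue: the family $C$ is not arbitrary --- the polynomial $f_C(L,M)$ in Corollary~\ref{cor:RT_infinite} is derived from $r(C)$, so $C$ must be pinned down as the specific family $\{\varpi^\ast(\rho_{s,t}\ast(P_u^{-1}\rho_{s,t}P_u))\}$ rather than ``a bending curve or its closure.''
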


Note that the knot $K_0$ is alternating, hyperbolic, and fibered (see Section~\ref{sec:Looper-Long}).
Theorem~\ref{thm:vary_conti} is motivated by the authors' previous work~\cite{KiNo20}.
They investigated the set
\[
\RT(M)=\{\tau_\rho(M) \mid [\rho] \in X^\irr(M),\ \text{acyclic}\} \subset \C
\]
of all values of the Reidemeister torsion for irreducible representations.
In \cite{KiNo20}, the authors proved that for 2-bridge knots $K_1$ and $K_2$, the set $\RT(\Sigma(K_1,K_2))$ is finite, while $X^\irr(\Sigma(K_1,K_2))$ has positive dimension.
Here the splice $\Sigma(K_1,K_2)$ is the closed 3-manifold $E(K_1)\cup_h E(K_2)$, where $h$ is an orientation-revering homeomorphism sending a meridian (resp.\ longitude) of $K_1$ to a longitude (resp.\ meridian) of $K_2$.
The proof is based on the fact that $\tau_\rho(E(K_j))$ cannot vary continuously if $r_j(\rho)$ is fixed ($j=1,2$) as mentioned before.
In contrast, Theorem~\ref{thm:vary_conti} implies the following consequence.

\begin{corollary}
\label{cor:RT_infinite}
Let $K$ be a $2$-bridge knot such that polynomials $f_C(L,M)$ and $A_K(M,L) \in \Z[L,M]$ have a common zero $(L_0,M_0)$ with $L_0,M_0 \neq 0$ and with $L_0\neq \pm 1$ or $M_0\neq \pm 1$, where $A_K$ is the $A$-polynomial of $K$.
Then the set $\RT(\Sigma(K_0,K))$ is an infinite set.
\end{corollary}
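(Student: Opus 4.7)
The plan is to combine Theorem~\ref{thm:vary_conti} with the standard multiplicativity of the Reidemeister torsion under gluing along an incompressible torus, exploiting the fact that the splice formula for $\Sigma(K_0,K)$ interchanges meridians and longitudes on the two sides.

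First, I would use the common zero $(L_0,M_0)$ to produce compatible boundary data. Namely, $A_K(M_0,L_0)=0$ gives a representation $\rho_1$ of $\pi_1(E(K))$ whose boundary restriction is conjugate to a diagonal representation sending the meridian $\mu_K$ to eigenvalue $M_0$ and the longitude $\lambda_K$ to eigenvalue $L_0$, while $f_C(L_0,M_0)=0$ gives some $\rho' \in C$ whose boundary restriction on $\partial E(K_0)$ sends $\mu_{K_0}\mapsto L_0$ and $\lambda_{K_0}\mapsto M_0$. Because the gluing $h$ in $\Sigma(K_0,K)$ swaps meridian and longitude, these two boundary restrictions match, so $\rho'$ and $\rho_1$ glue to a single representation $\rho$ of $\pi_1(\Sigma(K_0,K))$. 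The hypothesis that $L_0,M_0\neq 0$ and $(L_0,M_0)\neq(\pm 1,\pm 1)$ forces the common boundary representation to be non-central, which in turn guarantees that any representation glued from non-abelian pieces on both sides stays irreducible.

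Next, I would vary $\rho'$ over the preimage $r^{-1}(r(\rho'))\subset X(E(K_0))$. By Theorem~\ref{thm:vary_conti}, the function $\tau_\rho(E(K_0))$ is a non-constant continuous function on this preimage, hence takes infinitely many values on any positive-dimensional irreducible component. On the other side, since $K$ is a $2$-bridge knot the fibre $r_K^{-1}(r_K(\rho_1))\subset X(E(K))$ is finite, so $\tau_{\rho_1}(E(K))$ takes only finitely many values as $\rho_1$ ranges over extensions of the fixed boundary character. By the standard multiplicativity formula
\[
\tau_{\tilde\rho}(\Sigma(K_0,K)) = \tau_{\rho'}(E(K_0))\cdot \tau_{\rho_1}(E(K))\cdot \tau_{\rho|_{T^2}}(T^2)^{-1},
\]
used in \cite{KiNo20}, where the final boundary factor depends only on the boundary character and is therefore fixed throughout the family, the torsion of the glued representation $\tilde\rho$ varies over an infinite set of values. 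Gluing the varying $\rho'$ with a (necessarily fixed up to finitely many choices) $\rho_1$ thus produces infinitely many irreducible characters of $\Sigma(K_0,K)$ with distinct Reidemeister torsion, so $\RT(\Sigma(K_0,K))$ is infinite.

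The main obstacle is the verification that the glued representations are actually irreducible and acyclic, so that their characters contribute to $\RT(\Sigma(K_0,K))$: this is exactly what the non-triviality condition on $(L_0,M_0)$ buys us, but one must check carefully that a generic $\rho'$ in the preimage, together with the fixed $\rho_1$, avoids reducing to an abelian representation on all of $\pi_1(\Sigma(K_0,K))$. A secondary point, already implicit in Theorem~\ref{thm:vary_conti}, is that ``varies continuously'' must be upgraded to ``varies non-constantly'' on the relevant component; this should follow either from the explicit description of the family $C$ coming from the Cooper--Long construction or from a direct computation of $\tau_\rho(E(K_0))$ at two special points of $r^{-1}(r(\rho'))$.
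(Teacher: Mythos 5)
Your proposal is correct and follows essentially the same route as the paper: use the common zero of $f_C$ and $A_K(M,L)$ to glue the one-parameter family $\{\rho_u\}_u\subset C$ (whose boundary character is independent of $u$) to a fixed representation of $\pi_1(E(K))$, then invoke multiplicativity of the torsion together with Theorem~\ref{thm:vary_conti}. The points you flag as needing care (irreducibility of the glued representations, acyclicity, and non-constancy in $u$) are exactly what the paper's construction of $C$ and the explicit formula $\tau_{\rho_{s,t,u}}(E(K_0))=(f_2u^2+f_0+f_{-2}u^{-2})/(\cdots)$ with $f_{\pm2}$ not simultaneously vanishing already supply.
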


Here, $f_C(L,M) \in \Z[L,M]$ is the polynomial
\begin{align*}
 & L^2 M^{16}-L((M^{32}+1)-4(M^{30}+M^{2})-2(M^{28}+M^{4})+16(M^{26}+M^{6}) \\
 & +13(M^{24}+M^{8})-32(M^{22}+M^{10})-46(M^{20}+M^{12})+20(M^{18}+M^{14}) +70M^{16})+M^{16},
\end{align*}
which is derived from $r(C) \subset X(\partial E(K_0))$.
Note that the condition on $K$ in Corollary~\ref{cor:RT_infinite} is generic enough.
For instance, $3_1$ and $4_1$ satisfy the condition.

\subsection*{Acknowledgments}
This work was supported by JSPS KAKENHI Grant Numbers JP19K03505 and JP20K14317.

\section{Preliminaries}
\label{sec:Preliminaries}
\subsection{$\SL(2,\C)$-Reidemeister torsion and the torsion polynomial}
\label{subsec:Rtorsion}
Let $M$ be a connected compact 3-manifold whose boundary is empty or a disjoint union of tori and let $\rho\colon \pi_1(M) \to \SL(2,\C)$ be a representation, namely a group homomorphism.
Suppose $\rho$ is acyclic, that is, $H_\ast(M;\C^2_\rho)=0$.
We first endow $M$ with a cell decomposition so that $M$ is a CW-complex.
This gives the cellular chain complex $\{C_\ast(M;\C^2_\rho), \partial_\ast\}$ with an (ordered) basis $\cc_i$ of $C_i(M;\C^2_\rho)$ coming from $i$-cells.
We next choose a basis $\bb_i$ of $\Im\partial_{i+1}$ and its lift $\tilde{\bb}_i$ to $C_{i+1}(M;\C^2_\rho)$.
It follows from $H_\ast(M;\C^2_\rho)=0$ that the union $\bb_i\tilde{\bb}_{i-1}$ is a basis of $C_i(M;\C^2_\rho)$.
Let $[\bb_i\tilde{\bb}_{i-1}/\cc_i]$ denote the change of basis matrix from $\cc_i$ to $\bb_i\tilde{\bb}_{i-1}$.
Now, the \emph{$\SL(2,\C)$-Reidemeister torsion} (or simply \emph{Reidemeister torsion}) $\tau_\rho(M)$ of $M$ associated to $\rho$ is defined by
\[
\tau_\rho(M) = \prod_{i=0}^3 \det[\bb_i\tilde{\bb}_{i-1}/\cc_i]^{(-1)^{i+1}} \in \C^\times.
\]
See \cite{Joh88} and Section~1 of \cite{Kit94S,Kit94E} for details.
For instance, when $M$ is the complement of a 2-bridge knot $K$, the fundamental group $\pi_1(M)$ has a presentation with generators $x,y$ and a relator $r$.
Then, for an acyclic representation $\rho$, one can derive the formula
\[
\tau_\rho(M) = \frac{\det(\rho(\partial r/\partial y))}{\det(\rho(x)-I_2)},
\]
where $\partial r/\partial y$ denotes a Fox derivative of $r$.

We next consider the $\SL(2,\C)$-character variety $X(M)$, which is the GIT quotient of the $\SL(2,\C)$-representation variety $R(M)=\Hom(\pi_1(M),\SL(2,\C))$.
Let $X^\irr(M)$ denote the subspace of $X(M)$ consisting of conjugacy classes of irreducible representations.
In \cite[p.~54]{Joh88}, when $X^\irr(M)$ is a finite set, the \emph{torsion polynomial} $\sigma_M(t)$ of $M$ is defined by
\[
\sigma_M(t)=\prod_{[\rho] \in X^\irr(M),\ \text{acyclic}}(t-\tau_\rho(M)) \in \Q[t].
\]

\begin{remark}
\label{rem:convention}
Reidemeister torsion is sometimes defined to be $\prod_{i=0}^3 \det[\bb_i\tilde{\bb}_{i-1}/\cc_i]^{(-1)^{i}}$.
Our $\tau_\rho(M)$ is the same as Reidemeister torsion in \cite{Joh88, Kit16N, KiTr}, but the inverse of Reidemeister torsion in \cite{Kit94S, Kit94E}.
This difference is crucial for the results in this paper.
For instance, $\tau_\rho(S^3_1(4_1))$ is an algebraic integer, but its inverse is not since the constant term of $\sigma_{S^3_1(4_1)}(t)=t^3-12t^2+20t-8$ is not $\pm 1$.
\end{remark}

\subsection{The $A$-polynomial of knots}
Let $K$ be a knot in $S^3$ and recall that $E(K)$ denotes the complement of an open tubular neighborhood of $K$.
We consider the algebraic subset $U$ of $R(E(K))$ consisting of representations $\rho$ such that $\rho(\lambda)$ and $\rho(\mu)$ are upper triangular matrices, where $\lambda$ and $\mu$ denote a preferred longitude and a meridian, respectively.
Now we define a map $\xi\colon U \to \C^2$ by $\xi(\rho)=(\rho(\lambda)_{11}, \rho(\mu)_{11})$, where $X_{ij}$ denotes the $(i,j)$-entry of a matrix $X$.
The Zariski closure of the image of $\xi$ is some algebraic curves and points.
The product of the defining polynomials of the algebraic curves is known to be defined over $\Z$ so that the coefficients are relatively prime.
The resulting polynomial $A_K(L,M) \in \Z[L,M]$ is called the \emph{$A$-polynomial} of $K$, which is defined up to sign.
Following \cite{CCGLS94}, we drop the factor $L-1$ corresponding to abelian representations from the $A$-polynomial, that is, $A_\text{unknot}(L,M)=1$.

For 2-bridge knots, their $A$-polynomials are computed from Riley polynomials (see \cite[Section~7]{CCGLS94} and \cite[Section~3]{CoLo96}).
Here we recall the definition of the Riley polynomial introduced in \cite{Ril84}.
Let $K$ be a 2-bridge knot.
Then $\pi_1(E(K))$ has a presentation of the form $\ang{x,y \mid wx=yw}$, where $w$ is a word in $x$ and $y$.
For $s,t \in \C$ with $s\neq 0$, we consider the representation $\rho_{s,t}$ of the free group $\ang{x,y}$ by
\[
\rho_{s,t}(x) =
\begin{pmatrix}
 s & 1 \\
 0 & 1/s
\end{pmatrix},\quad
\rho_{s,t}(y) =
\begin{pmatrix}
 s & 0 \\
 -t & 1/s
\end{pmatrix}
\]
and define the \emph{Riley polynomial} $\phi(s,t)$ of $K$ by
\[
\phi(s,t)= \rho_{s,t}(w)_{11}+(s-s^{-1})\rho_{s,t}(w)_{12} \in \Z[s^{\pm 1},t].
\]
It is shown in \cite[Theorem~1]{Ril84} that $\rho_{s,t}$ factors through $\pi_1(E(K))$ if and only if $\phi(s,t)=0$ holds.
Moreover, every non-abelian representation is conjugate to some $\rho_{s,t}$.
Note that $\rho_{s,t}$ is irreducible if and only if $t\neq 0$.

The $A$-polynomial of a 2-bridge knot is obtained from its Riley polynomial by eliminating $t$.
More precisely, we consider the resultant
\[
\res_t(L-\rho_{M,t}(\lambda)_{11}, \phi(M,t)) \in \Z[L, M^{\pm 1}]
\]
of polynomials in $t$, and the $A$-polynomial is a factor of this resultant.
See Section~\ref{subsec:resultant} for the definition of the resultant.

\begin{remark}
The variable $M$ in $A_K(L,M)$ can be taken as the variable $s$ in $\phi(s,t)$ for a 2-bridge knot $K$. 
In this paper, however, we use a pair $(L,M)$ for $A_K(L,M)$ and $(s,t)$ for $\phi(s,t)$ under the conventions of these polynomials.
\end{remark}

\begin{example}
Let us consider the case $K=J(2,4)=5_2$.
Then $w=[y,x^{-1}]^2$, $\mu=x$, and $\lambda=[x,y^{-1}]^2[y,x^{-1}]^2$.
Thus, we have
\begin{align*}
 \phi(s,t) &= \left(2(s^2+s^{-2})-3\right) t^2+\left(-(s^4+s^{-4})+3 (s^2+s^{-2})-6\right) t+2(s^2+s^{-2})-t^3-3, \\
 A_{5_2}(L,M) &= -L^3 + L^2 (1 - 2 M^2 - 2 M^4 + M^8 - M^{10}) \\
 &\qquad + L M^4 (-1 + M^2 - 2 M^6 - 2 M^8 + M^{10}) - M^{14}.
\end{align*}
Let us give an observation.
Since $\phi(i,t)=-t^3-7t^2-14t-7$ and $A_{5_2}(L,i)=(L-1)^3$, there are three irreducible representations of $\pi_1(E(5_2))$ such that the restrictions to $\pi_1(\partial E(5_2))$ are identical.
However, one can check that $\tau_{\rho_{i,t}}(E(5_2))=(-t^3-3t^2+2t+9)/2$, and thus $\tau_{\rho_{i,t}}(E(5_2))$'s are distinct for the three representations.
That is, the Reidemeister torsion $\tau_\rho(E(5_2))$ is not necessarily determined by the restriction $r(\rho)=\rho|_{\partial E(5_2)}$.
On the other hand, $\tau_\rho(E(5_2))$ can only take finitely many values while $r(\rho)$ is fixed.
In particular, it does not vary continuously.
\end{example}

\subsection{Resultants of polynomials and algebraic integers}
\label{subsec:resultant}
Let $R$ be an integral domain.
Let $f(x)=a_0x^m+a_1x^{m-1}+\dots+a_m$ and $g(x)=b_0x^n+b_1x^{n-1}+\dots+b_n \in R[x]$ with $a_0,b_0 \neq 0$.
Then the \emph{resultant} of $f$ and $g$ is defined by
\[
\res_x(f,g) =
\begin{vmatrix}
 a_0 & a_1 & \cdots & a_{m-1} & a_m & & & \\
 & a_0 & a_1 & \cdots & a_{m-1} & a_m & & \\
 & & \ddots & \ddots & & \ddots & \ddots & \\
 & & & a_0 & a_1 & \cdots & a_{m-1} & a_m \\
 b_0 & b_1 & \cdots & b_{n-1} & b_n & & & \\
 & b_0 & b_1 & \cdots & b_{n-1} & b_n & & \\
 & & \ddots & \ddots & & \ddots & \ddots & \\
 & & & b_0 & b_1 & \cdots & b_{n-1} & b_n
\end{vmatrix}
\in R,
\]
which is the determinant of an $(n+m)\times(m+n)$ matrix.
It is well-known that $\res_x(f,g)=0$ if and only if $f$ and $g$ have a common zero in the algebraic closure of the quotient field of $R$.
The resultant is useful to eliminate variables from equations.
For instance, let $f(x,y)=2x^2 + y^2 - 1$, and $g(x,y)=x y - 1 \in R[x]$, where $R=\Z[y]$.
Then one has $\res_x(f,g)=y^4-y^2+2 \in \Z[y]$.
In particular, the $y$-coordinates of common zeros of $f$ and $g$ are algebraic integers.
We denote by $\A$ the ring of algebraic integers.
It is known that the ring $\A$ is integrally closed in $\C$.
See \cite[Sections~0.2 and 5.2]{MaRe03} for fundamental properties of algebraic integers and interesting relations to 3-manifolds.

Throughout this paper, we use some basic properties of the resultant.
For example, we use the facts that $\res_x(f,g)=a_0^n\prod_{g(\zeta)=0}f(\zeta)$ and $\res_x(f,gh)=\res_x(f,g)\res_x(f,h)$ (see \cite[Chapter~3, Section~1]{CLO05} for instance).

The rest of this section is devoted to showing the key lemma for the proof of Proposition~\ref{prop:twist_knot}.
Before starting the proof, we give an easy observation.
For $f(x)=a_0x^m+a_1x^{m-1}+\dots+a_m$, consider a one-row matrix
$A=\begin{pmatrix}
 a_0 & a_1 & \cdots & a_m
\end{pmatrix}$.
By adding $x$ times the $j$th entry to the $(j+1)$st entry for $j=1,2,\dots,m-1$ in this order, we obtain
$\begin{pmatrix}
 a_0 & a_0x+a_1 & \cdots & f(x)
\end{pmatrix}$.
If we apply this procedure $k+1$ times ($0\leq k<m$) to $A$, then the $i$th entry is equal to $\sum_{j=0}^{i-1}\binom{k+j}{j}a_{i-1-j}x^j \in R[x]$.
One can check it by the equality $\binom{k+j-1}{j-1}+\binom{k+j-1}{j} = \binom{k+j}{j}$ about binomial coefficients.
In particular, the $(m-k)$th entry is equal to $\frac{1}{k!}f^{(k)}(x)$, where $f^{(k)}$ denotes the $k$th derivative of $f$.
For instance, when $m=4$ and $k=2$, we have
\[
\begin{pmatrix}
 a_0 & 3 a_0 x+a_1 & 6 a_0 x^2+3 a_1 x+a_2 & \ast & \ast 
\end{pmatrix}.
\]
Its 3rd entry is actually equal to $\frac{1}{2!}f^{(2)}(x) = \frac{1}{2}(12a_0 x^2+6a_1 x+2a_2)$.

\begin{lemma}
\label{lem:factor_of_res}
Let $\zeta \in R$ and $f,g \in R[x]$.
If $g(\zeta)^{m-k}$ divides $\frac{1}{k!}f^{(k)}(\zeta)$ for all $0\leq k<m$, then $g(\zeta)^m$ divides $\res_x(f,g)$.
\end{lemma}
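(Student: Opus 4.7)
The plan is to make the change of variables $x = \zeta + g(\zeta)\,w$, which under the hypothesis forces $f$ to acquire an overall factor of $g(\zeta)^m$ and $g$ to acquire a factor of $g(\zeta)$; the scaling rule of the resultant then separates these factors cleanly.

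First, by translation invariance of the resultant, $\res_x(f,g) = \res_y(F,G)$, where $F(y) := f(\zeta+y)$ and $G(y) := g(\zeta+y)$ lie in $R[y]$. Expanding at $\zeta$, write $F(y) = \sum_{k=0}^{m} c_k y^k$ with $c_k = f^{(k)}(\zeta)/k!$ and $G(y) = \sum_{k=0}^{n} d_k y^k$ with $d_k = g^{(k)}(\zeta)/k!$; in particular $c_m = a_0$ and $d_0 = g(\zeta)$. The hypothesis says $c_k = g(\zeta)^{m-k}\gamma_k$ for some $\gamma_k \in R$ when $0 \le k < m$. Substituting $y = g(\zeta)\,w$ then gives
\[
F(g(\zeta)w) = g(\zeta)^m\, h(w),\qquad h(w) := a_0 w^m + \gamma_{m-1}w^{m-1}+\cdots+\gamma_0 \in R[w],
\]
\[
G(g(\zeta)w) = g(\zeta)\,\tilde g(w),\qquad \tilde g(w) := 1 + \sum_{k=1}^{n}d_k g(\zeta)^{k-1}w^k \in R[w]
\]
(the factorization of $G$ uses only $d_0 = g(\zeta)$, while the factorization of $F$ is where the hypothesis is essential).

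Next I would apply the standard resultant identities
\[
\res_w\!\big(F(\beta w),G(\beta w)\big) = \beta^{mn}\res_y(F,G),\qquad \res(\alpha F_1,\beta G_1) = \alpha^{\deg G_1}\beta^{\deg F_1}\res(F_1,G_1),
\]
with $\alpha = g(\zeta)^m$ and $\beta = g(\zeta)$. These combine to
\[
g(\zeta)^{mn+m}\res_w(h,\tilde g) = g(\zeta)^{mn}\res_x(f,g)
\]
as an identity in $R$. Since $R$ is an integral domain, $g(\zeta)^{mn}$ may be cancelled whenever $g(\zeta) \neq 0$, yielding $\res_x(f,g) = g(\zeta)^m \res_w(h,\tilde g) \in g(\zeta)^m R$, as desired.

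The main obstacle is the degenerate case $g(\zeta) = 0$, where the cancellation above is invalid; but then the hypothesis for $k=0$ forces $f(\zeta) = 0$, so $f$ and $g$ share the common root $\zeta$ and $\res_x(f,g) = 0$, which makes the conclusion trivial. The observation preceding the lemma fits naturally here: the column operation $C_{c+1}\leftarrow C_{c+1}+\zeta C_c$ applied to the Sylvester matrix is the matrix realization of the Taylor expansion at $\zeta$, and the identity that the $(m-k)$th entry equals $\tfrac{1}{k!}f^{(k)}(\zeta)$ is exactly reading off $c_k$ as an entry of the column-operated matrix, so the same argument can be recast matrix-theoretically by performing these column operations and extracting $g(\zeta)^m$ from the resulting determinant via the hypothesis.
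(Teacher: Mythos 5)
Your proof is correct, and it takes a genuinely different route from the paper. The paper works directly on the Sylvester determinant: it performs the column operations $C_{j+1}\mapsto C_{j+1}+\zeta C_j$ (which, exactly as you observe at the end, realize the Taylor expansion at $\zeta$ and make the divided derivatives $\frac{1}{k!}f^{(k)}(\zeta)$ appear as entries), then iteratively extracts one factor of $g(\zeta)$ from the last column, sweeps out the bottom row with the resulting entry $1$, shrinks the matrix by one, and repeats $m$ times, ending with $\res_x(f,g)=g(\zeta)^m\lvert X\rvert$ for some $n\times n$ matrix $X$ over $R$. Your substitution $x=\zeta+g(\zeta)w$ compresses that entire induction into two standard identities (translation invariance plus the homogeneity rules $\res_w(F(\beta w),G(\beta w))=\beta^{mn}\res_y(F,G)$ and $\res(\alpha F_1,\beta G_1)=\alpha^{\deg G_1}\beta^{\deg F_1}\res(F_1,G_1)$, both verifiable over the algebraic closure of the fraction field and hence valid in $R$), and it yields the sharper exact factorization $\res_x(f,g)=g(\zeta)^m\res_w(h,\tilde g)$ rather than bare divisibility; in effect it identifies the paper's leftover determinant $\lvert X\rvert$ as the resultant of the two renormalized polynomials $h,\tilde g\in R[w]$. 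Your cancellation of $g(\zeta)^{mn}$ is legitimate since $R$ is an integral domain, and you correctly isolate the degenerate case $g(\zeta)=0$, where the $k=0$ hypothesis forces $f(\zeta)=0$, so $x-\zeta$ divides both $f$ and $g$ in $R[x]$ and the resultant vanishes (a case the paper's column-extraction argument also covers, since the relevant column is then zero). One point to make explicit: in arbitrary characteristic the symbol $\frac{1}{k!}f^{(k)}(\zeta)$ must be read as the divided (Hasse) derivative, i.e.\ precisely your Taylor coefficient $c_k\in R$ of $f(\zeta+y)$ --- this is exactly the reading fixed by the paper's preliminary observation before the lemma, and with it your argument is valid over any integral domain, matching the paper's generality; in the paper's application $R=\C[M]$, so nothing is at stake there. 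What the paper's hands-on proof buys is self-containedness (only determinant manipulations, no resultant calculus); what yours buys is brevity, an exact identity, and a conceptual explanation of why the hypothesis $g(\zeta)^{m-k}\mid c_k$ is the natural one: it is exactly the condition making $F(g(\zeta)w)$ divisible by $g(\zeta)^m$ as a polynomial.
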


\begin{proof}
In the definition of $\res_x(f,g)$, we first add $\zeta$ times the $j$th column to the $(j+1)$st column for $j=1,2,\dots,m+n-1$ in this order.
We next add $-\zeta$ times the $i$th row to the $(i-1)$st row for $i=2,3,\dots,n,n+2,n+3,\dots,m+n$:
\begin{align*}
\res_x(f,g) &=
\begin{vmatrix}
 a_0 & a_0\zeta+a_1 & \cdots & & f(\zeta) & \cdots & & \zeta^{n-1}f(\zeta) \\
 & a_0 & & \cdots & & f(\zeta) & & \\
 & & \ddots & \ddots & & \ddots & \ddots & \\
 & & & a_0 & a_0\zeta+a_1 & \cdots & & f(\zeta) \\
 b_0 & b_0\zeta+b_1 & \cdots & & g(\zeta) & \cdots & & \zeta^{m-1}g(\zeta) \\
 & b_0 & & \cdots & & g(\zeta) & & \\
 & & \ddots & \ddots & & \ddots & \ddots & \\
 & & & b_0 & b_0\zeta+b_1 & \cdots & & g(\zeta)
\end{vmatrix}\\
&=
\begin{vmatrix}
 a_0 & a_1 & \cdots & a_{m-1} & a_m & & & \\
 & a_0 & a_1 & \cdots & a_{m-1} & a_m & & \\
 & & \ddots & \ddots & & \ddots & \ddots & \\
 & & & a_0 & a_0\zeta+a_1 & \cdots & & f(\zeta) \\
 b_0 & b_1 & \cdots & b_{n-1} & b_n & & & \\
 & b_0 & b_1 & \cdots & b_{n-1} & b_n & & \\
 & & \ddots & \ddots & & \ddots & \ddots & \\
 & & & b_0 & b_0\zeta+b_1 & \cdots & & g(\zeta)
\end{vmatrix}.
\end{align*}
Since $g(\zeta)^m \mid f(\zeta)$, one can divide the $(m+n)$th column by $g(\zeta)$.
We then sweep out the bottom row by 1 at the bottom left:
\begin{align*}
\res_x(f,g) &=
g(\zeta)
\begin{vmatrix}
 a_0 & a_1 & \cdots & a_{m-1} & a_m & & & \\
 & a_0 & a_1 & \cdots & a_{m-1} & a_m & & \\
 & & \ddots & \ddots & & \ddots & \ddots & \\
 & & & a_0 & a_0\zeta+a_1 & \cdots & & f(\zeta)g(\zeta)^{-1} \\
 b_0 & b_1 & \cdots & b_{n-1} & b_n & & & \\
 & b_0 & b_1 & \cdots & b_{n-1} & b_n & & \\
 & & \ddots & \ddots & & \ddots & \ddots & \\
 & & & b_0 & b_0\zeta+b_1 & \cdots & & 1
\end{vmatrix}\\
&=
g(\zeta)
\begin{vmatrix}
 a_0 & a_1 & \cdots & a_{m-1} & a_m & & & \\
 & \ddots & \ddots & & \ddots & \ddots & & \\
 & & a_0 & a_1 & \cdots & a_{m-1} & a_m & \\
 & & & a_0-\ast & a_0\zeta+a_1-\ast & \cdots & & f(\zeta)g(\zeta)^{-1} \\
 b_0 & b_1 & \cdots & b_{n-1} & b_n & & & \\
 & \ddots & \ddots & & \ddots & \ddots & & \\
 & & b_0 & b_1 & \cdots & b_{n-1} & b_n & \\
 & & & 0 & 0 & \cdots & 0 & 1
\end{vmatrix},
\end{align*}
where $\ast$'s are divisible by $g(\zeta)^{m-1}$.
We now reduce the size of the matrix by 1 and apply almost the same column and row operations as above:
\begin{align*}
\res_x(f,g) &=
g(\zeta)
\begin{vmatrix}
 a_0 & a_1 & \cdots & a_{m-1} & a_m & & \\
 & \ddots & \ddots & & \ddots & \ddots & \\
 & & a_0 & a_1 & \cdots & a_{m-1} & a_m \\
 & & & a_0-\ast & a_0\zeta+a_1-\ast & \cdots & \\
 b_0 & b_1 & \cdots & b_{n-1} & b_n & & \\
 & \ddots & \ddots & & \ddots & \ddots & \\
 & & b_0 & b_1 & \cdots & b_{n-1} & b_n \\
\end{vmatrix} \\
&=
g(\zeta)
\begin{vmatrix}
 a_0 & a_1 & \cdots & & a_{m-1} & a_m & & \\
 & \ddots & \ddots & & & \ddots & \ddots & \\
 & & a_0 & a_0\zeta+a_1 & \cdots & & & \frac{1}{0!}f(\zeta) \\
 & & & a_0-\ast & 2a_0\zeta+a_1-\ast & \cdots & & \frac{1}{1!}f^{(1)}(\zeta)-\ast \\
 b_0 & b_1 & \cdots & b_{n-1} & b_n & & & \\
 & \ddots & \ddots & & \ddots & \ddots & & \\
 & & b_0 & b_1 & \cdots & b_{n-1} & b_n & \\
 & & & b_0 & b_0\zeta+b_1 & \cdots & & g(\zeta)
\end{vmatrix}.
\end{align*}
Since $g(\zeta)^{m-1} \mid \frac{1}{1!}f^{(1)}(\zeta)-\ast$, one can divide the $(m+n-1)$st column by $g(\zeta)$.
By sweeping out the bottom row, $\res_x(f,g)$ is equal to
\begin{align*}
g(\zeta)^2
\begin{vmatrix}
 a_0 & a_1 & \cdots & & a_{m-1} & a_m & & \\
 & \ddots & \ddots & & & \ddots & \ddots & \\
 & & a_0-\ast & a_0\zeta+a_1-\ast & \cdots & & & f(\zeta)g(\zeta)^{-1} \\
 & & & a_0-\ast' & 2a_0\zeta+a_1-\ast' & \cdots & & (f^{(1)}(\zeta)-\ast)g(\zeta)^{-1} \\
 b_0 & b_1 & \cdots & b_{n-1} & b_n & & & \\
 & \ddots & \ddots & & \ddots & \ddots & & \\
 & & b_0 & b_1 & \cdots & b_{n-1} & b_n & \\
 & & & 0 & 0 & \cdots & 0 & 1
\end{vmatrix},
\end{align*}
where each $\ast'$ is divisible by $g(\zeta)^{m-2}$.
The assumption and the previous observation guarantee that one can continue this process until the size of the matrix is $n\times n$.
We finally obtain $\res_x(f,g) = g(\zeta)^m|X|$, where $X$ is some $n\times n$ matrix over $R$, and hence $g(\zeta)^m \mid \res_x(f,g)$.
\end{proof}

\section{Reidemeister torsion and algebraic integers}
\label{sec:algebraic_integer}
The aim of this section is to prove Theorems~\ref{thm:twist_knot} and \ref{thm:4_1}.

\subsection{The proof of Theorem~\ref{thm:twist_knot}}

\begin{lemma}
Let $K$ be a $2$-bridge knot.
Then the coefficient of the leading term of the $A$-polynomial $A_K(L,M)$ with respect to $L$ is a unit, namely a power of $M$ up to sign.
Moreover, it holds for the lowest degree term as well.
\end{lemma}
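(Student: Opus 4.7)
The plan is to derive both halves of the lemma from the resultant presentation of the $A$-polynomial recalled just above: $A_K(L,M)$ divides, in $\Z[L, M^{\pm 1}]$, the polynomial
\[
R(L,M) := \res_t\!\bigl(L - P(M,t),\ \phi(M,t)\bigr),\qquad P(M,t) := \rho_{M,t}(\lambda)_{11}.
\]
Since $\Z[M^{\pm 1}]$ is a UFD, once I show that the leading and constant $L$-coefficients of $R$ are units (i.e.\ of the form $\pm M^k$), the same must hold for its divisor $A_K$: in any factorization of $R$ as a polynomial in $L$, the leading (resp.\ constant) coefficient of each factor divides that of $R$, so divisors of units are units.

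Writing $\phi(M,t) = c_0(M)t^n+\dots+c_n(M)$ and $P(M,t) = p_0(M)t^d+\dots+p_d(M)$, I would first invoke the identity
\[
R(L,M) = (-1)^{dn}\, c_0(M)^d \prod_{\phi(M,\beta) = 0}\bigl(L - P(M,\beta)\bigr),
\]
which identifies the leading $L$-coefficient of $R$ with $\pm c_0(M)^d$ and the constant $L$-term with $(-1)^n \res_t(P(M,t),\phi(M,t))$. That $c_0(M)$ is a unit is a direct inspection of $\phi$: since $\rho_{M,t}(x^{\pm 1})$ has no $t$-entries while $\rho_{M,t}(y^{\pm 1})$ carries $t$ only in its $(2,1)$-entry $\mp t$, the top-$t$ contribution to $\rho_{M,t}(w)_{11}$ is obtained by picking up $\mp t$ from every $y^{\pm 1}$-factor along a unique path through the matrix product, and a careful bookkeeping shows that the accompanying $M$-prefactors from the $x^{\pm 1}$-slots collapse to $\pm 1$. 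Hence $c_0(M) = \pm 1$, and $c_0(M)^d$ is a unit.

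For the constant $L$-coefficient, I would show $\res_t(P,\phi) \in \Z[M^{\pm 1}]$ is itself a unit in two steps. First, it has no zero at any $M_0 \in \C^\times$: such a zero would yield a representation $\rho_{M_0,t_0}$ of $\pi_1(E(K))$ with $\rho(\lambda)_{11} = 0$, and when $M_0 \neq \pm 1$ the commuting pair $\rho(\lambda), \rho(\mu)$ admits a common eigenbasis in which $\rho(\lambda)_{11}$ becomes a nonzero $\SL(2,\C)$-eigenvalue of $\rho(\lambda)$, while when $M_0 = \pm 1$ commutativity with the parabolic $\rho(\mu)$ pins $\rho(\lambda)$ into upper-triangular form with $\pm 1$ on the diagonal. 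Second, the leading $M$-coefficient of $\res_t(P,\phi)$ is $\pm 1$, extracted by tracking top $M$-powers through $P(M,t)$ and the $c_i(M)$'s in parallel with the analysis of $c_0(M)$. Together these force $\res_t(P,\phi) = \pm M^k$. The main obstacle is this last step: the nonvanishing on $\C^\times$ is a clean $\SL(2,\C)$-eigenvalue fact, whereas pinning down the leading $M$-coefficient of the resultant requires delicate control over the $M$-degrees in its determinantal expansion, and is the only place where the specific 2-bridge structure of $w$ enters substantively.
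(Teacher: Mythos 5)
Your treatment of the leading coefficient is essentially the paper's argument: both of you observe that $A_K$ divides the resultant $\res_t(\phi(M,t),\,L-\rho_{M,t}(\lambda)_{11})$ in $\Z[L,M^{\pm1}]$, that $\phi(M,t)$ has leading $t$-coefficient $\pm 1$ (the paper simply cites \cite[Lemma~2]{Ril84} rather than re-deriving it by tracking $t$ through the matrix product, and your ``careful bookkeeping'' is the hand-wavy part of that step), and that unit leading coefficients pass to divisors in the UFD $\Z[M^{\pm1}]$. So the first half is fine.

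The second half has a genuine gap, and you have correctly identified where it is. Your route reduces the constant $L$-coefficient to $\res_t(P,\phi)\in\Z[M^{\pm1}]$, and your eigenvalue argument does show this Laurent polynomial has no zeros in $\C^{\times}$; but that only forces it to be $cM^{k}$ for some nonzero \emph{integer} $c$, and you still must prove $c=\pm1$. You flag this as ``the main obstacle'' and do not carry it out, so the proof is incomplete precisely at the point where the conclusion could fail (an integer constant $c\neq\pm1$ would not be a unit in $\Z[M^{\pm1}]$). The paper avoids this entirely: it deduces the statement for the lowest-degree term from the statement for the leading term via the functional equation $A_K(L,M)=A_K(L^{-1},M^{-1})$ up to units in $\Z[L,M]$, quoted from \cite[Proposition~4.2(1)]{CoLo96}. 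That symmetry swaps the top and bottom $L$-coefficients (up to a unit), so the second claim is an immediate corollary of the first. If you want to salvage your direct approach, you would either need to establish that same symmetry yourself or find an independent argument that the integer content of $\res_t(P,\phi)$ is $1$; as written, nothing in your proposal does this.
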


\begin{proof}
First recall that $\phi(s,t) \in \Z[s^{\pm 1},t]$ denotes the Riley polynomial of $K$.
Since $A_K(L,M)$ is a factor of the resultant
\[
f(L,M)=\res_t(\phi(M,t), L-\rho_{M,t}(\lambda)_{11}) \in \Z[L,M^{\pm 1}],
\]
it suffices to prove that the leading term of $f(L,M)$ with respect to $L$ is a unit.
By \cite[Lemma~2]{Ril84}, the coefficient of the leading term of $\phi(M,t)$ with respect to $t$ is $\pm 1$.
It follows from the definition of the resultant that the leading term of $f(L,M)$ with respect to $L$ is a power of $L$ up to sign.

Now, the latter follows from the equality $A_K(L,M)=A_K(L^{-1},M^{-1})$ up to units in $\Z[L,M]$ (see \cite[Proposition~4.2(1)]{CoLo96}).
\end{proof}

Before the next lemma, we recall a boundary slope of a knot $K$.
For an incompressible surface $S$ in $E(K)$, its boundary consists of parallel simple closed curves on $\partial E(K)$.
Thus, $S$ defines an element $p[\mu]+q[\lambda] \in H_1(\partial E(K);\Z)$ up to sign, and then $p/q \in \Q\cup\{1/0\}$ is called a \emph{boundary slope} of $K$.
It is proved in \cite[Theorem~3.4]{CCGLS94} that the slope of a side of the Newton polygon $N(A_K)$ of $A_K(L,M)$ is a slope of $K$.
Combining this theorem and \cite[Theorem~1(b)]{HaTh85}, we have that slopes of sides of $N(A_K)$ are even integers for a 2-bridge knot $K$.

\begin{lemma}
\label{lem:M_in_A}
Let $K$ be a $2$-bridge knot and let $p=\pm 1$ and $q>0$.
Then the coefficients of the highest and lowest degree terms of $\res_L(A_K(L,M), M^pL^q-1) \in \Z[M^{\pm 1}]$ are $\pm 1$.
\end{lemma}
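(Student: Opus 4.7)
The plan is to evaluate
\[
R(M) := \res_L\bigl(A_K(L,M),\ M^p L^q - 1\bigr)
\]
through the root-product formula
\[
R(M) = c_n(M)^q \prod_{j=1}^{n} \bigl(M^p \alpha_j(M)^q - 1\bigr),
\]
where $n = \deg_L A_K$, $c_n(M) = \pm M^a$ is a monomial by the preceding lemma, and the $\alpha_j(M)$ are the roots of $A_K$ as a polynomial in $L$. The leading and trailing $M$-coefficients of $R(M)$ will then be extracted from the asymptotics of the right-hand side as $M \to \infty$ and $M \to 0$ respectively.

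First I would invoke the Newton--Puiseux theorem: $\alpha_j(M) \sim c_j M^{-\sigma_j}$ as $M \to \infty$, with the exponents $-\sigma_j$ determined by the slopes of the top boundary of the Newton polygon $N(A_K)$ (each occurring with multiplicity equal to the horizontal length of the corresponding edge), and the constants $c_j$ given by the roots of the associated edge polynomial. The crucial 2-bridge input --- combining \cite[Theorem~3.4]{CCGLS94} with \cite[Theorem~1(b)]{HaTh85} --- is that every edge of $N(A_K)$ has even integer slope. For $p = \pm 1$ and $q \geq 1$, this forces $p - q\sigma_j$ to be odd, hence nonzero, so each factor $M^p \alpha_j^q - 1$ has an unambiguous leading $M$-term: it is $c_j^q M^{p - q\sigma_j}$ when $p - q\sigma_j > 0$ and $-1$ otherwise, with no degenerate cancellation. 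Partitioning the indices by the sign of $p - q\sigma_j$ and applying Vieta's formulas to each top-boundary edge polynomial, the product of the relevant $c_j$'s telescopes across the boundary down to $\pm a_{v^\ast} / a_{v_{\mathrm{end}}}$, where $v_{\mathrm{end}} \in \{(0,b),\,(n,a)\}$ is the appropriate endpoint of the top boundary and $v^\ast$ is the unique vertex of $N(A_K)$ extremal in the direction $(-p, q)$. Since $a_{v_{\mathrm{end}}} = \pm 1$ by the preceding lemma, the leading $M$-coefficient of $R(M)$ reduces to $\pm a_{v^\ast}^q$.

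The main obstacle is then to show that the vertex coefficient $a_{v^\ast}$ is itself $\pm 1$; note that when $v^\ast$ coincides with $v_{\mathrm{end}}$ (i.e.\ when either all top-boundary slopes have the same sign) this is automatic, so the genuine work is for interior transition vertices. I would extract this from the Riley-polynomial description: $\phi(M,t)$ is $t$-monic up to sign over $\Z[M^{\pm 1}]$ by \cite[Lemma~2]{Ril84}, and the explicit matrix form of $\rho_{M,t}(x), \rho_{M,t}(y)$ forces the $t$-leading coefficient of $\rho_{M,t}(\lambda)_{11}$ to be a monomial $\pm M^\ast$; a Sylvester-matrix analysis of $\res_t(\phi(M,t),\,L - \rho_{M,t}(\lambda)_{11})$ then identifies every boundary-vertex coefficient of the resulting Newton polygon as a unit in $\Z[M^{\pm 1}]$, and extracting $A_K$ (at most by dividing off the abelian factor $L - 1$) preserves this, so that $a_{v^\ast} = \pm 1$. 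The statement for the lowest degree coefficient of $R(M)$ follows by the symmetric analysis as $M \to 0$, or equivalently via the reciprocity $A_K(L,M) = \pm L^n M^c A_K(L^{-1}, M^{-1})$ which exchanges the top and bottom boundaries of $N(A_K)$.
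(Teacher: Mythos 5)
Your computation of the extremal coefficient is correct and, in substance, arrives at the same identity the paper extracts directly from the Sylvester matrix: the paper takes the monomial $cL^iM^j$ of $A_K$ maximal for the lexicographic order $(\deg_M,\deg_L)$ when $p=-1$ (resp.\ $(\deg_M,-\deg_L)$ when $p=1$), notes that it sits at a corner $v$ of $N(A_K)$, and uses evenness of the boundary slopes (\cite[Theorem~3.4]{CCGLS94} plus \cite[Theorem~1(b)]{HaTh85}) to see that the line of slope $p$ through $v$ supports $N(A_K)$, so that the single choice $\pm(cM^j)^q(M^{-1})^{d-i}1^i$ strictly dominates all other terms of the determinant. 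That $v$ is exactly your $v^\ast$ extremal in direction $(-p,q)$, and your Newton--Puiseux/Vieta telescoping is a legitimate, if heavier, alternative to that one-line determinant extraction; your treatment of the lowest term via $A_K(L,M)=A_K(L^{-1},M^{-1})$ also matches the paper.

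The genuine gap is your ``main obstacle'' step, $a_{v^\ast}=\pm 1$. The paper disposes of it by citing \cite[Theorem~11.3]{CoLo96}: for a $2$-bridge knot, the coefficients at the corners of $N(A_K)$ are $\pm 1$. You instead assert that $t$-monicity of $\phi(M,t)$ (\cite[Lemma~2]{Ril84}) together with the $t$-leading coefficient of $\rho_{M,t}(\lambda)_{11}$ being a monomial yields, ``by a Sylvester-matrix analysis,'' that every boundary-vertex coefficient of the Newton polygon of $\res_t(\phi(M,t),L-\rho_{M,t}(\lambda)_{11})$ is a unit. Those two hypotheses control only the extreme $L$-coefficients (the content of the preceding lemma), not the mixed corners: for instance $\res_t(t^2-M^2-2,\,L-t)=L^2-M^2-2$ satisfies both hypotheses yet has the non-unit corner coefficient $-2$ at the origin of its Newton polygon. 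Worse, the needed unit-ness does not propagate formally through the elimination of $t$ even for $2$-bridge knots, because the Riley polynomial itself can have non-unit corner coefficients: for $K=5_2$, the polynomial $\phi(s,t)$ displayed in the paper has coefficient $2$ at the genuine corner $(\deg_t,\deg_s)=(0,\pm 2)$ of $N(\phi)$. So the claim you sketch is precisely the nontrivial content of Cooper--Long's theorem, and your proposal as written does not prove it. Replacing that paragraph by a citation of \cite[Theorem~11.3]{CoLo96} closes the gap, after which the rest of your argument is sound.
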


\begin{proof}
It suffices to discuss only the highest degree term of the resultant since $A_K(L,M)=A_K(L^{-1},M^{-1})$ up to units in $\Z[L,M]$.
Let $d=\deg_L A_K(L,M)$ and suppose $p=-1$.
Let $c L^iM^j$ be the highest degree term of $A_K(L,M)$ with respect to the lexicographic order of $(\deg_M, \deg_L)$.
Since this monomial corresponds to a corner $v$ of the Newton polygon $N(A_K)$, \cite[Theorem~11.3]{CoLo96} implies $c=\pm 1$.
When we choose $c M^j$'s in the first $|q|$ rows of the matrix appearing in the resultant, the highest degree term is $\pm(c M^j)^{q}(M^{-1})^{d-i}1^i = \pm M^{jq-d+i}$.
Now, let $l$ be the line through $v$ with slope $p=-1$.
Since the slopes of the sides of $N(A_K)$ are even, $N(A_K)$ lies in the lower left half space.
Therefore, all terms in the resultant except $\pm M^{jq-d+i}$ have a lower degree.

In the case $p=1$, we can show the statement in a similar way by considering the lexicographic order of $(\deg_M, -\deg_L)$.
\end{proof}

As a consequence of Lemma~\ref{lem:M_in_A}, if $\rho_{s_0,t_0}$ is a representation satisfying the condition of Theorem~\ref{thm:twist_knot}, then $s_0$ and $s_0^{-1}$ are algebraic integers.

\begin{lemma}
\label{lem:s0t0}
For a root $(s_0,t_0)$ of $\phi(s,t)$, if $s_0, s_0^{-1} \in \A$, then $t_0 \in \A$.
\end{lemma}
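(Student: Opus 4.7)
The plan is to reduce the statement to a standard integrality fact about monic polynomials by invoking the information already recorded about the leading coefficient of $\phi(s,t)$ in the variable $t$.

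First, I would write the Riley polynomial as
\[
\phi(s,t) = \sum_{i=0}^{N} c_i(s)\, t^i, \qquad c_i(s) \in \Z[s^{\pm 1}],
\]
and recall from \cite[Lemma~2]{Ril84} (already cited in the proof of Lemma~\ref{lem:M_in_A}) that $c_N(s) = \pm 1$. So as a polynomial in $t$, $\phi(s,t)$ is monic up to sign, with coefficients in $\Z[s^{\pm 1}]$.

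Next, I would use the assumption $s_0, s_0^{-1} \in \A$. Since $\A$ is a ring containing $\Z$, it contains $\Z[s_0, s_0^{-1}]$; hence every $c_i(s_0)$ is an algebraic integer. Therefore $\pm \phi(s_0, t) \in \A[t]$ is a monic polynomial with $t_0$ as a root. Because $\A$ is integrally closed in $\C$ (as recalled in Section~\ref{subsec:resultant}), this forces $t_0 \in \A$, completing the proof.

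There is no real obstacle here; the entire content of the lemma is the observation that monicness of $\phi$ in $t$ (Riley's Lemma~2) converts hypotheses on $s_0$ directly into an integrality conclusion for $t_0$ via the integral closure of $\A$ in $\C$.
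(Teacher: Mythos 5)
Your proof is correct and follows exactly the same route as the paper's: Riley's Lemma~2 gives that $\phi(s_0,t)$ is monic (up to sign) with coefficients in $\Z[s_0^{\pm 1}]\subset\A$, and the integral closedness of $\A$ in $\C$ then yields $t_0\in\A$. The paper's own proof is just a two-line compression of this argument.
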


\begin{proof}
By \cite[Lemma~2]{Ril84}, $\phi(s_0,t) \in \A[t]$ is monic.
Since the ring $\A$ of algebraic integers is integrally closed in $\C$, we conclude $t_0 \in \A$.
\end{proof}

In the rest of this section, let $K=J(2,2m)$ ($m\neq 0$).
Define $d(m)=\deg_t\phi(1,t)$, then by \cite[Lemma~2]{Ril84}, we have
\[
d(m)=
\deg_t\phi(s,t)=
\begin{cases}
 2m-1 & \text{if $m>0$,} \\
 -2m & \text{if $m<0$.}
\end{cases}
\]

\begin{lemma}
\label{lem:diff_of_A}
 $\frac{1}{n!}\frac{\partial^nA_K}{\partial L^n}(-1,M)$ is divisible by $(M^2-1)^{d(m)-n}$ for $0\leq n< d(m)$.
\end{lemma}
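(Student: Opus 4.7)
The plan is to reduce the divisibility to the specialization identity $A_K(L, \pm 1) = \pm(L+1)^{d(m)}$, and then to extract higher-order vanishing from the factorization of $A_K(L, M)$ over the algebraic closure of $\C(M)$.

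The core computation is the congruence $\rho_{\pm 1, t}(\lambda)_{11} \equiv -1 \pmod{\phi(\pm 1, t)}$. Writing $w = [y, x^{-1}]^m$ and $\lambda = w^* w$ with $w^* = [x, y^{-1}]^m$ in the standard 2-bridge presentation of $K = J(2, 2m)$, set $C = \rho_{1, t}([y, x^{-1}])$ and $C^* = \rho_{1, t}([x, y^{-1}])$. A direct computation shows $\tr C = \tr C^* = \tau$ with $\tau = 2 + t^2$, so by Cayley--Hamilton $C^m = S_{m-1}(\tau)\,C - S_{m-2}(\tau)\,I$ and analogously for $(C^*)^m$, where $S_k$ is the Chebyshev polynomial of the second kind (extended to all $k \in \Z$ via $S_{-k} = -S_{k-2}$). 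Reading off the $(1,1)$-entry yields $\phi(1, t) = (1-t)\,S_{m-1}(\tau) - S_{m-2}(\tau)$, and expanding $(C^*)^m C^m$ while using the classical identity $S_{m-1}^2 + S_{m-2}^2 - \tau\,S_{m-1}S_{m-2} = 1$ gives the closed form $\rho_{1, t}(\lambda)_{11} = 1 - 2t^3\,S_{m-1}(\tau)^2$. Substituting $S_{m-2} \equiv (1-t)S_{m-1} \pmod{\phi(1, t)}$ into that same Chebyshev identity produces $t^3\,S_{m-1}(\tau)^2 \equiv 1 \pmod{\phi(1, t)}$, from which the desired congruence follows. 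The case $s = -1$ reduces to the previous one via the symmetry $\rho_{-1, t}(g) = (-1)^{\epsilon(g)}\rho_{1, t}(\bar g)$, where $\bar g$ inverts each generator and $\epsilon(g)$ denotes the exponent sum: since $\epsilon(w) = \epsilon(\lambda) = 0$, one verifies $\phi(-1, t) = \phi(1, t)$ and $\rho_{-1, t}(\lambda)_{11} = \rho_{1, t}(\lambda)_{11}$ as polynomials in $t$, so the congruence also holds at $s = -1$.

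From the congruence, a standard resultant identity---together with the leading $t$-coefficient of $\phi(\pm 1, t)$ being $\pm 1$ by \cite[Lemma~2]{Ril84}---yields $\res_t(L - \rho_{\pm 1, t}(\lambda)_{11}, \phi(\pm 1, t)) = \pm(L+1)^{d(m)}$. Since $A_K$ divides this resultant in $\Z[L, M^{\pm 1}]$ with $\deg_L A_K = d(m)$ (standard for twist knots) and leading $L$-coefficient a unit, in particular non-vanishing at $M = \pm 1$ (by the first lemma of this subsection), we deduce $A_K(L, \pm 1) = \pm (L+1)^{d(m)}$. Now factor $A_K(L, M) = c(M)\prod_{i=1}^{d(m)}(L - L_i(M))$ in $\overline{\C(M)}[L]$ where $c(M) \in \Z[M^{\pm 1}]$ is the leading $L$-coefficient; the specialization above forces $L_i(\pm 1) = -1$ for every $i$. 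The Taylor-expansion identity
\[
  \frac{1}{n!}\,\frac{\partial^n A_K}{\partial L^n}(-1, M) = c(M)\,e_{d(m)-n}\bigl(-1 - L_1(M), \ldots, -1 - L_{d(m)}(M)\bigr),
\]
with $e_k$ the $k$th elementary symmetric polynomial, expresses the quantity of interest as $c(M)$ times a sum of products of $d(m) - n$ factors, each vanishing to order at least one at $M = \pm 1$. Hence the right-hand side vanishes to order at least $d(m) - n$ at $M = 1$ and at $M = -1$; since $(M-1)^{d(m)-n}$ and $(M+1)^{d(m)-n}$ are coprime in $\Z[M]$, their product $(M^2-1)^{d(m)-n}$ divides $\frac{1}{n!}\partial_L^n A_K(-1, M)$ over $\Z$.

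The main obstacle is the first paragraph: the precise coefficients in the Chebyshev formulas for $\phi(1, t)$ and $\rho_{1, t}(\lambda)_{11}$ must align so that the single identity $S_{m-1}^2 + S_{m-2}^2 - \tau S_{m-1}S_{m-2} = 1$ is enough to produce the mod-$\phi$ congruence, and the parallel calculation at $s = -1$ must reduce to the identical computation modulo the identical Riley polynomial, so that both specializations of $A_K$ can be treated uniformly.
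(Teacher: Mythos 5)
There is a genuine gap at the final step, and it is fatal. Your first two paragraphs are correct and nicely done: the congruence $t^3S_{m-1}(\tau)^2\equiv 1 \pmod{\phi(\pm1,t)}$ does follow from the Chebyshev identity (I checked $1+(1-t)^2-\tau(1-t)=t^3$ with $\tau=t^2+2$), and this recovers $A_K(L,\pm 1)=\pm(L+1)^{d(m)}$ --- which is \cite[Corollary~3(i)]{HoSh04}, a fact the paper itself invokes in the proof of Theorem~\ref{thm:twist_knot}, so you have re-derived a known specialization. The problem is the third paragraph. The roots $L_i(M)$ are Puiseux branches in $\overline{\C(M)}$, and the condition $L_i(\pm 1)=-1$ only tells you that $L_i(M)+1$ has \emph{positive} valuation at $M=\pm1$; that valuation can be fractional, namely $1/e_i$ where $e_i$ is the ramification index of the branch. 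The point $(L,M)=(-1,\pm1)$ is exactly where all $d(m)$ branches of the curve $A_K=0$ collide, i.e.\ a (typically singular) point where ramification is the a priori expectation, so the assertion ``each factor $-1-L_i(M)$ vanishes to order at least one'' is unjustified. Without it, your elementary-symmetric-function identity only yields that $e_{d(m)-n}$ vanishes to order at least $(d(m)-n)\cdot\min_i(1/e_i)$, which can be far less than $d(m)-n$; indeed, even the case $n=0$ is not established, since one cycle of ramification index $d(m)$ would give total vanishing order as small as $1$.

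In fact, by Newton-polygon theory over the local field $\C((M\mp1))$ (the leading $L$-coefficient $c(M)=\pm M^k$ has valuation $0$ at $M=\pm1$), the statement ``$v(L_i+1)\geq 1$ for every branch'' is \emph{equivalent} to the full Lemma~\ref{lem:diff_of_A}; what your specialization $A_K(L,\pm1)=\pm(L+1)^{d(m)}$ honestly gives is only $v\bigl(\frac{1}{n!}\partial_L^nA_K(-1,M)\bigr)\geq 1$ for each $n<d(m)$, i.e.\ simple vanishing of every coefficient, not vanishing to order $d(m)-n$. So your argument assumes, at the decisive moment, essentially the statement to be proved. This is why the paper does not argue via specialization at all: it proceeds by induction on $m$ using the Hoste--Shanahan recursion $A_{J(2,2m)}=x\,A_{J(2,2m-2)}-y\,A_{J(2,2m-4)}$ together with Leibniz's rule, after checking the divisibilities $(M^2-1)^{2-k}\mid\frac{1}{k!}\frac{\partial^kx}{\partial L^k}(-1,M)$ and $(M^2-1)^{4-k}\mid\frac{1}{k!}\frac{\partial^ky}{\partial L^k}(-1,M)$ and the base cases $m=-1,0,1,2$ directly; it is this recursion that supplies the higher-order vanishing at $M=\pm1$ which mere evaluation at $M=\pm1$ cannot see. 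To salvage your route you would need an independent proof that no branch of the $A$-polynomial curve of $J(2,2m)$ is ramified over $M=\pm1$ with $v(L_i+1)<1$, which is precisely the hard content of the lemma.
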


\begin{proof}
The proof is by induction on $m$.
The cases $m=-1,0,1,2$ are directly confirmed by \cite[Theorem~1]{HoSh04}.
For instance, in the case $m=2$, we have
\begin{align*}
 A_K(-1,M) &= (M^2-1)^3 \left(2 M^8+4 M^6+5 M^4+4 M^2+2\right), \\
 \frac{1}{1!}\frac{\partial A_K}{\partial L}(-1,M) &= -(M^2-1)^2 \left(M^{10}-M^6-4 M^4-6 M^2-5\right), \\
 \frac{1}{2!}\frac{\partial^2A_K}{\partial L^2}(-1,M) &= (M^2-1) \left(M^8+2 M^2+4\right).
\end{align*}
Suppose $m>2$.
Following \cite{HoSh04}, let
\begin{align*}
 x(L,M)&= -L+L^2 +2LM^2 +M^4 +2LM^4 +L^2M^4 +2LM^6 +M^8 -LM^8, \\
 y(L,M)&= M^4(L+M^2)^4.
\end{align*}
Then one can check that $(M^2-1)^{2-k} \mid \frac{1}{k!}\frac{\partial^k x}{\partial L^k}(-1,M)$ for $0\leq k\leq 2$ and $(M^2-1)^{4-k} \mid\frac{1}{k!} \frac{\partial^k y}{\partial L^k}(-1,M)$ for $0\leq k\leq 4$.
By the recursive relation in \cite[Theorem~1]{HoSh04} and Leibniz's rule, we have
\[
\frac{1}{n!}\frac{\partial^nA_K}{\partial L^n} = \sum_{k=0}^{2} \frac{1}{k!}\frac{\partial^k x}{\partial L^k} \frac{1}{(n-k)!}\frac{\partial^{n-k}A_{J(2,2m-2)}}{\partial L^{n-k}} - \sum_{k=0}^{4} \frac{1}{k!}\frac{\partial^k y}{\partial L^k} \frac{1}{(n-k)!}\frac{\partial^{n-k}A_{J(2,2m-4)}}{\partial L^{n-k}}.
\]
When $L=-1$, by the induction hypothesis, each term in the sums is divisible by $(M^2-1)^{2m-1-n}$.

The cases $m<-1$ also follow from the recursive relation in a similar way.
\end{proof}

\begin{proposition}
\label{prop:twist_knot}
Let $p=\pm 1$.
Then, $\res_L(A_K(L,M), M^pL^q-1)$ is divisible by $(M^p(-1)^{q}-1)^{d(m)}$.
\end{proposition}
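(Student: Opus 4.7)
The plan is to apply Lemma~\ref{lem:factor_of_res} in the integral domain $R = \Z[M^{\pm 1}]$, taking $f(L) = A_K(L,M) \in R[L]$, $g(L) = M^p L^q - 1 \in R[L]$, and $\zeta = -1 \in R$. First I would confirm that $\deg_L A_K(L,M) = d(m)$ for the twist knot $K = J(2,2m)$, which can be verified on the small cases $m \in \{-1, 1, 2\}$ and then propagated by induction using the $A$-polynomial recursion of \cite[Theorem~1]{HoSh04} exactly as in the proof of Lemma~\ref{lem:diff_of_A}, together with the fact that the leading $L$-coefficient is a unit so no cancellation occurs. The leading coefficient of $f$ in $L$ is a unit in $R$ by the first lemma of this subsection, and the leading coefficient of $g$ is $M^p$, also a unit; hence Lemma~\ref{lem:factor_of_res} is applicable with its exponent $m$ taken to be $d(m)$.

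The pivotal observation is that $g(-1) = M^p(-1)^q - 1$ divides $M^2 - 1$ in $R$ regardless of the parity of $q$. Indeed, if $q$ is even then $g(-1) = M^p - 1$ is a unit multiple of $M - 1$; if $q$ is odd then $g(-1) = -(M^p + 1)$ is a unit multiple of $M + 1$; in both scenarios $g(-1) \mid (M^2 - 1)$ in $R$, so after raising to the appropriate power we obtain $g(-1)^{d(m)-k} \bigm| (M^2 - 1)^{d(m)-k}$ in $R$ for every $0 \leq k < d(m)$.

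Combining this with Lemma~\ref{lem:diff_of_A}, which supplies $(M^2 - 1)^{d(m)-k} \bigm| \tfrac{1}{k!} \tfrac{\partial^k A_K}{\partial L^k}(-1, M)$ for each $0 \leq k < d(m)$, transitivity of divisibility yields $g(-1)^{d(m)-k} \bigm| \tfrac{1}{k!} f^{(k)}(\zeta)$ throughout that range. This is exactly the hypothesis of Lemma~\ref{lem:factor_of_res}, whose conclusion is $g(-1)^{d(m)} \bigm| \res_L(f,g)$, i.e.\ $(M^p(-1)^q - 1)^{d(m)} \bigm| \res_L(A_K(L,M), M^p L^q - 1)$, which is the claim. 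The substantive work has already been done in Lemmas~\ref{lem:factor_of_res} and \ref{lem:diff_of_A}; I expect no genuine obstacle here, only the minor bookkeeping of confirming the $L$-degree of $A_K$ equals $d(m)$ so that the exponents line up, and performing the parity case split that identifies $g(-1)$ as a divisor of the $M^2 - 1$ appearing in the derivative bounds.
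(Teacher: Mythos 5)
Your proposal is correct and follows essentially the same route as the paper: the paper's proof likewise observes that $M^p(-1)^q-1$ divides $M^2-1$ and then applies Lemma~\ref{lem:factor_of_res} with $\zeta=-1$ to $f=A_K(L,M)$ and $g=M^pL^q-1$, with Lemma~\ref{lem:diff_of_A} supplying the divisibility hypotheses. Your extra bookkeeping (checking $\deg_L A_K=d(m)$ and the parity split for $g(-1)$) only makes explicit what the paper leaves implicit.
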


\begin{proof}
Note that $M^p(-1)^{q}-1 \mid M^2-1$ when $p=\pm 1$.
Now, Lemma~\ref{lem:diff_of_A} allows us to apply Lemma~\ref{lem:factor_of_res} to $\zeta= -1 \in \C[M]$ and
\[
f(L,M)=A_K(L,M),\ g(L,M)=ML^q-1 \in \C[M][L]=\C[L,M].
\]
This completes the proof.
\end{proof}

Here recall that $\tau_\rho(E(K))$ is written of the form
\[
\frac{\det(\rho_{s_0,t_0}(\partial r/\partial y))}{\det(\rho_{s_0,t_0}(x)-I_2)} = -s_0(s_0-1)^{-2}F(s_0,t_0),
\]
where $F(s,t) \in \Z[s^{\pm 1},t]$.
In the situation of Theorem~\ref{thm:twist_knot}, $s_0$ and $F(s_0,t_0)$ are algebraic integers by Lemmas~\ref{lem:M_in_A} and \ref{lem:s0t0}.
Hence, it suffices for proving the theorem to consider whether $(s_0-1)^{-1}$ is an algebraic integer.

\begin{lemma}
\label{lem:alpha-1}
Let $f(x) \in \Z[x]$ be a monic polynomial and let $\alpha \in \C\setminus\{1\}$ be a root of $f$.
If $f(1)=\pm 1$, then $(\alpha-1)^{-1}$ is an algebraic integer.
\end{lemma}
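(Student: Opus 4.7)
The plan is to produce an explicit monic polynomial in $\Z[y]$ having $(\alpha-1)^{-1}$ as a root, by combining a shift and a reciprocal transformation on $f$.

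First I would translate $f$ so that the hypothesis $f(1)=\pm 1$ becomes a statement about a constant term. Set $\beta=\alpha-1$ and $g(y)=f(y+1)$. Since $f$ is monic with integer coefficients, so is $g\in\Z[y]$, with $\deg g=\deg f=:d$ and $g(\beta)=f(\alpha)=0$. The hypothesis rewrites as $g(0)=f(1)=\pm 1$, so the constant term of $g$ is a unit of $\Z$.

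Next I would pass to the reciprocal polynomial. Write $g(y)=y^d+c_{d-1}y^{d-1}+\cdots+c_1 y+c_0$ with $c_0=\pm 1$, and set $h(y)=c_0 y^d g(1/y)\in\Z[y]$. The leading coefficient of $h$ is $c_0^2=1$, so $h$ is monic, and its coefficients lie in $\Z$ because those of $g$ do. Since $\alpha\neq 1$ we have $\beta\neq 0$, and $h(1/\beta)=c_0\beta^{-d}g(\beta)=0$. Thus $(\alpha-1)^{-1}=1/\beta$ is a root of a monic polynomial over $\Z$, hence an algebraic integer.

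There is no genuine obstacle in this lemma; it is the standard shift-then-reverse trick, and the hypothesis $f(1)=\pm 1$ is precisely what is needed so that the reversal step preserves monicity over $\Z$. The hypothesis $\alpha\neq 1$ enters only to ensure that the substitution $y\mapsto 1/y$ is valid at $\beta$.
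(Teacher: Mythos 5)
Your proof is correct and is essentially the same argument as the paper's: the paper substitutes $\alpha=\beta^{-1}+1$ directly into $f$ and clears denominators, which is exactly your shift-then-reverse composition carried out in one step, with $f(1)=\pm 1$ serving in both cases to make the resulting integer polynomial monic after normalization. No issues.
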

\begin{proof}
Let $\beta=(\alpha-1)^{-1}$, namely $\alpha=\beta^{-1}+1$.
Since $f$ is monic, we have
\[
0=f(\alpha)=f(\beta^{-1}+1)=\beta^{-\deg f}+\cdots+f(1).
\]
Here, by $f(1)=\pm 1$, there exists a monic polynomial $g \in \Z[x]$ such that $g(\beta)=0$.
\end{proof}

\begin{proof}[Proof of Theorem~\ref{thm:twist_knot}]
Recall that $q$ is odd.
Let us show that $(s_0-1)^{-1} \in \A$.
By Lemma~\ref{lem:alpha-1}, it suffices to find a monic polynomial $f(s)$ such that $f(s_0)=0$ and $f(1)=\pm 1$.
Proposition~\ref{prop:twist_knot} implies that
\[
f(s):= (s+1)^{-d(m)} \res_L(A_K(L,s), sL^q-1)
\]
lies in $\Z[s^{\pm 1}]$ and its is monic.
By \cite[Corollary~3(i)]{HoSh04}, we have
\begin{align*}
 f(1) &= (1+1)^{-d(m)} \res_L(\pm(L+1)^{d(m)}, L^q-1) \\
 &= \pm 2^{-d(m)} \res_L(L+1, L^q-1)^{d(m)} = \pm 1.
\end{align*}
Therefore, $(s_0-1)^{-1}$ is an algebraic integer.
\end{proof}

\begin{remark}
When $q$ is even, $(s_0-1)^{-1}$ is not necessarily an algebraic integer.
For example, in the case where $J(2,-2)=4_1$ and $(p,q)=(1,2)$, $s_0$'s are the roots of the polynomial
\[
f(x)=x^{14}+2 x^{13}+x^{12}-4 x^{10}-8 x^9-10 x^8-13 x^7-10 x^6-8 x^5-4 x^4+x^2+2 x+1.
\]
Here, $f(x)$ is irreducible since it is the product of two irreducible polynomials of degree 7 over the prime field $\mathbb{F}_2$ of order $2$ and is the product of two irreducible polynomials of degree 4 and 10 over $\mathbb{F}_3$.
Thus, if $(s_0-1)^{-1} \in \A$, then $f(1)=\pm 1$.
However, we have $f(1)=-49$.

On the other hand, $\tau_\rho(E(4_1)) = -2(s_0+s_0^{-1}-1)$ is an algebraic integer since $s_0^{\pm 1} \in \A$.
The authors predict that Theorem~\ref{thm:twist_knot} holds for every integer $q$.
\end{remark}

\subsection{The proof of Theorem~\ref{thm:4_1}}
This subsection is devoted to proving that $\tau_\rho(S^3_{p/q}(4_1))$ is an algebraic integer when $p=1$ or $q=1$.
We also observe that it is an algebraic integer when $p/q=2/3$ in Example~\ref{ex:2/3}.
Moreover, $\tau_\rho(S^3_{1/2}(5_2))$ is an algebraic integer as well (see Example~\ref{ex:5_2} and Problem~\ref{prob:tau_closed}).

Let $q\neq 0$ and let $\rho$ be an acyclic representation of $\pi_1(S^3_{p/q}(4_1))$.
Then
\[
\tau_\rho(S^3_{p/q}(4_1)) = \frac{-2(s+s^{-1}-1)}{\det(\rho(\mu^{p'}\lambda^{q'})-I_2)} = 2(s+s^{-1}-1)s^{p'}L^{q'}(s^{p'}L^{q'}-1)^{-2},
\]
where $pq'-qp'=1$ and $L=\rho(\lambda)_{11}$.
We divide the proof of Theorem~\ref{thm:4_1} into two cases: (I) $p=1$, $q\neq 0$ and (II) $p\neq 0$, $q=1$.
Recall that
\begin{align*}
A_{4_1}(L,M) &= L^2M^4+L(-M^8-1+M^6+M^2+2M^4)+M^4 \\
 &= \left(L^2+L(-(M+M^{-1})^4+5(M+M^{-1})^2-2)+1\right)M^4.
\end{align*}

\begin{proof}[Proof of Theorem~\ref{thm:4_1}(I)]
Let $p=1$, $q\neq 0$.
Then one can take $(p',q')=(0,1)$.
By Lemmas~\ref{lem:M_in_A} and \ref{lem:s0t0}, it suffices to show that $(L_0-1)^{-1}$ is an algebraic integer.
First, by direct computations, we have $\tr\rho(\mu)\neq -2$.
Indeed, if $s=-1$, then
\[
\rho(\lambda)=
\begin{pmatrix}
 -1 & \pm 2 i \sqrt{3} \\
 0 & -1 \\
\end{pmatrix},
\]
and hence $\rho(x)\rho(\lambda)^q \neq I_2$.
See also \cite[Lemma~6.2]{KiTr}.

Let $f(L)=A_{4_1}(L,L^{-q})$ and $g(L)=L^{-q}+L^q \in \Z[L^{\pm 1}]$.
Then, we have $f(-1)=0$ and
\begin{align*}
f'(L) &= \left(2L+(-g(L)^4+5g(L)^2-2)+L(-4g'(L)g(L)^3+10g'(L)g(L))\right)L^{-4q} \\
 &\qquad -4q\left(L^2+L(-g(L)^4+5g(L)^2-2)+1\right)L^{-4q-1}.
\end{align*}
It follows from $g'(-1)=0$ that $f'(-1)=0$, and hence $f(L)=(L+1)^2h(L)$ for some $h(L) \in \Z[L^{\pm 1}]$.
Here, $h(L_0)=0$ and $h(1)=1$ since $L_0\neq -1$ and $f(1)=4$.
Therefore, Lemma~\ref{lem:alpha-1} implies $(L_0-1)^{-1} \in \A$.
\end{proof}

Next, we discuss the case (II), that is, $q=1$.
Then one can take $(p',q')=(-1,0)$.
If $(s-1)^{-1}$ is an algebraic integer, we can obtain the desired result.
However, the case $p=4$, we have $A_{4_1}(s^{-4},s)=s^{-2}(s^2+1)^2$, and hence $s=\pm i$.
The minimal polynomial of $(\pm i-1)^{-1}$ is $2x^2+2x+1$, and thus they are not algebraic integers.
This observation suggests that one needs to show that $\tau_\rho(S^3_{p}(4_1)) = 2(s^2-s+1)/(s-1)^2$ is an algebraic integer directly.
Note that
\[
A_{4_1}(s^{-p},s) = s^{2-p}+2 s^{4-p}+s^{6-p}-s^{8-p}-s^{-p}+s^{4-2 p}+s^4.
\]

\begin{lemma}
\label{lem:div_by_16}
The coefficient of the leading term of
\[
\res_s(\tau(s-1)^2-2(s^2-s+1), A_{4_1}(s^{-p},s)) \in \Z[\tau]
\]
is equal to $\pm 16$.
Moreover, it is divisible by $4(2\tau-3)^2$ if $p$ is odd and by $16$ if $p$ is even.
\end{lemma}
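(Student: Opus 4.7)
The plan is to exploit the reciprocal symmetries of both factors in order to reduce the Sylvester determinant to a compact formula in a single auxiliary variable. The polynomial $f(\tau,s) := \tau(s-1)^2 - 2(s^2-s+1) = (\tau-2)s^2 - 2(\tau-1)s + (\tau-2)$ is reciprocal in $s$, so its two roots $\alpha_\pm$ satisfy $\alpha_+\alpha_- = 1$ and $\alpha_+ + \alpha_- = v$, where $v := 2(\tau-1)/(\tau-2)$. Setting $G(s) := s^{2p} A_{4_1}(s^{-p},s) \in \Z[s]$ (for $p\geq 0$; the opposite sign is handled symmetrically), the identity $L^2 M^8 A_{4_1}(L^{-1},M^{-1}) = A_{4_1}(L,M)$ already used in Lemma~\ref{lem:M_in_A} gives $s^{2p+8} G(s^{-1}) = G(s)$, and hence $G(s) = s^{p+4}\hat{G}(s+s^{-1})$ for a unique $\hat{G}(u) \in \Z[u]$ of degree $d := \deg_s G - (p+4)$.

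With this setup, the Poisson-style formula for the resultant yields
\[
\res_s(f,G) = (\tau-2)^{\deg_s G}\,G(\alpha_+)G(\alpha_-) = (\tau-2)^{N-2d}\,P(\tau)^2, \qquad P(\tau) := (\tau-2)^{d}\hat{G}(v),
\]
with $N := \deg_s G$ and $P(\tau) \in \Z[\tau]$. Since $v \to 2$ as $\tau \to \infty$, the leading coefficient of $P$ is $\hat{G}(2) = G(1) = A_{4_1}(1,1) = 4$, so the resultant has leading coefficient $\pm 16$, giving the first assertion.

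For odd $p$ I plan to show $(v+2) \mid \hat{G}(v)$: the value $v=-2$ corresponds to $s=-1$, so $\hat{G}(-2) = \pm G(-1) = \pm A_{4_1}((-1)^p,-1)$, and the direct identity $A_{4_1}(L,-1) = (L+1)^2$ shows this vanishes exactly for $p$ odd. Using $(\tau-2)(v+2) = 2(2\tau-3)$, one gets $P(\tau) = 2(2\tau-3)(\tau-2)^{d-1}\hat{G}_1(v)$ where $\hat{G} = (v+2)\hat{G}_1$, so $P(\tau)^2$, and hence the resultant, contains the factor $4(2\tau-3)^2$. For even $p$ I plan instead to show $P(\tau) \in 4\Z[\tau]$. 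Since $A_{4_1}(L,M)$ involves only even powers of $M$, one has $G(-s) = G(s)$ when $p$ is even, so $\hat{G}$ is an even polynomial; writing $\hat{G}(u) = \sum_k c_k u^k$ (only even $k$) and expanding
\[
P(\tau) = \sum_k 2^k c_k (\tau-2)^{d-k}(\tau-1)^k,
\]
every term with $k \geq 2$ already lies in $4\Z[\tau]$, and $c_0 = \hat{G}(0) = \pm G(i) = \pm A_{4_1}((-1)^{p/2},i) \in 4\Z$ thanks to $A_{4_1}(L,i) = (L-1)^2$. Hence $P = 4Q$ with $Q \in \Z[\tau]$, and the resultant equals $16(\tau-2)^{N-2d}Q(\tau)^2$.

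The place that will need honest bookkeeping is the Laurent-versus-polynomial issue: the second argument $A_{4_1}(s^{-p},s)$ of the resultant is a Laurent polynomial, and different integers by which one clears denominators (not to mention the cancellations that drop $\deg_s G$ below $p+8$ once $p\geq 4$) alter the resultant by a factor of a power of $(\tau-2)$. But $(\tau-2)^k$ is coprime to both $4(2\tau-3)^2$ and $16$, so all three assertions are insensitive to this; I would make this precise once at the start so that the subsequent manipulations are unambiguous.
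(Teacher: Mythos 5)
Your proposal is correct, and its architecture is genuinely different from the paper's, which argues directly on the Sylvester matrix: there, the leading coefficient comes from $\frac{1}{n!}\frac{d^n}{d\tau^n}h(\tau)=\res_s((s-1)^2,A_{4_1}(s^{-p},s))=\pm A_{4_1}(1,1)^2=\pm 16$; the odd case from $(s+1)^2\mid A_{4_1}(s^{-p},s)$ and multiplicativity of the resultant, extracting the factor $\res_s(f,(s+1)^2)=\pm(4\tau-6)^2$ (the same idea in substance as your $(v+2)\mid\hat{G}$ step); and the even case from column operations plus mod-$2$/mod-$4$ bookkeeping showing every coefficient except possibly that of $\tau^{n-3}$ is divisible by $16$, finished by evaluating $h(1)=\pm A_{4_1}(i^{-p},i)A_{4_1}((-i)^{-p},-i)\in\{0,\pm 16\}$. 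You instead exploit the reciprocity of both arguments and the Poisson formula to obtain the closed form $\res_s(f,G)=\pm(\tau-2)^{N-2d}P(\tau)^2$; note that the exponent $N-2d=2p+8-N$ is exactly the $s$-adic valuation of $G$, so the minimally cleared resultant is $\pm P(\tau)^2$ on the nose. All three assertions then drop out of properties of the single polynomial $P$, and you get the stronger structural fact that $h$ is essentially a perfect square --- reflecting that $\tau=2(u-1)/(u-2)$ depends only on $u=s+s^{-1}$, so the roots of $A_{4_1}(s^{-p},s)$ pair up under $s\mapsto s^{-1}$ with equal torsion values. Amusingly, both proofs ultimately rest on the same two evaluations, $A_{4_1}(L,-1)=(L+1)^2$ and $A_{4_1}(L,i)=(L-1)^2$. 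Your closing normalization remark is right and does need to be said once: multiplying by powers of the monic $\tau-2$ changes neither the leading coefficient nor, by Gauss's lemma on contents, divisibility by $16$ or by $4(2\tau-3)^2$. What the paper's computation buys is robustness (it uses no palindromic normalization of the $A$-polynomial, so it would adapt to knots without that exact reciprocity); what yours buys is a shorter, less error-prone verification and the square structure of the torsion polynomial as a bonus.
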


\begin{proof}
Let $n=\deg A_{4_1}(s^{-p},s)$ and
\begin{align*}
h(\tau) &= \res_s(\tau(s-1)^2-2(s^2-s+1), A_{4_1}(s^{-p},s)) \\
 &=
\begin{vmatrix}
 \tau-2 & -2\tau+2 & \tau-2 & & \\
 & \ddots & & \ddots & \\
 & & \tau-2 & -2\tau+2 & \tau-2 \\
 \ast & \cdots & & \ast & \\
 & \ast & \cdots & & \ast 
\end{vmatrix}.
\end{align*}
Then, by the definition of the resultant,
\begin{align*}
 \frac{1}{n!}\frac{d^n}{d\tau^n}h(\tau) &= \res_s((s-1)^2, A_{4_1}(s^{-p},s)) \\
 &= \pm A_{4_1}(1,1)^2 = \pm 16.
\end{align*}
When $p$ is odd, we first note that $(s+1)^2 \mid A_{4_1}(s^{-p},s)$.
Thus, $h(\tau)$ is a multiple of $\res_s\left(\tau(s-1)^2-2(s^2-s+1), (s+1)^2\right) = \pm(4\tau-6)^2$.

We next consider the case $p$ even.
For the matrix in the resultant $h(\tau)$, we add the $j$th column to the $(j+1)$st column for $j=1,2,\dots,n+1$ in this order twice.
Then one obtains
\[
\begin{vmatrix}
 \tau-2 & -2 & -4 & \cdots & & -2n-2 \\
 & \tau-2 & -2 & -4 & \cdots & -2n \\
 & & \ddots & & & \vdots \\
 & & & \tau-2 & -2 & -4 \\
 \ast & \cdots & & \ast & \ast' & \ast' \\
 & \ast & \cdots & \ast & \ast' & \ast'
\end{vmatrix},
\]
where four $\ast'$'s are multiples of $4$.
Since $\tau$'s appear only in the diagonal and the entries in the upper triangle are even, the coefficient of $\tau^k$ is a multiple of $16$ unless $k=n-1, n-2, n-3$.
Here note that there are even number of odd integers in the $(n+i)$th row for $i=1,2$ since $2 \mid A_{4_1}(1,1)$.
Thus, the coefficient of $\tau^k$ is a multiple of $16$ when $k=n-1, n-2$.

Now, the coefficients in $h(\tau)$ are multiples of $16$ except the coefficient of $\tau^{n-3}$.
Hence, it suffices to see that $16 \mid h(1)=\res_s(-s^2-1,A_{4_1}(s^{-p},s))$.
By a property of the resultant, we have $h(1) = \pm A_{4_1}(i^{-p},i)A_{4_1}((-i)^{-p},-i)$, which is equal to $0$ or $\pm 16$.
\end{proof}

\begin{proof}[Proof of Theorem~\ref{thm:4_1}(II)]
By Lemma~\ref{lem:div_by_16}, when $p$ is odd, $4^{-1}(2\tau-3)^{-2}h(\tau)$ is in $\Z[\tau]$ and monic.
When $p$ is even, $16^{-1}h(\tau)$ is in $\Z[\tau]$ and monic as well.
Hence, we conclude that $\tau_\rho(S^3_{p}(K))$ is an algebraic integer.
\end{proof}

We now pose the following problem.

\begin{problem}
\label{prob:tau_closed}
It is natural to ask whether $\tau_\rho(S^3_{p/q}(4_1))$ is always an algebraic integer.
More generally, whether $\tau_\rho(S^3_{p/q}(K))$ is an algebraic integer for any knot $K$ with $\dim_\C X^\irr(E(K))=1$.
\end{problem}


We confirm that $\tau_\rho(S^3_{p/q}(K))$ is an algebraic integer for every irreducible representation $\rho$ in the cases of $S^3_{2/3}(4_1)$ and $S^3_{1/2}(5_2)$.

\begin{example}
\label{ex:2/3}
Let us consider the case $p/q=2/3$.
Then there are 12 representations of $\pi_1(S^3_{2/3}(4_1))$ as follows.
\[
\begin{array}{l | l | l}
 s & t & \tau_\rho(S^3_{2/3}(4_1)) \\ \hline
 -0.200325+0.979729 i & -3.42754 & 0.738094 \\
 0.200325+0.979729 i & -3.42754 & 5.85638 \\
 -0.490393+0.871501 i & -2.21504 & 2.38654 \\
 -1.30664+0.0498758 i & -0.392004+0.724199 i & 5.8872-0.943648 i \\
 -0.264802+0.964303 i & -1.43838 & 8.4028 \\
 0.490393+0.871501 i & -2.21504 & 0.0164217 \\
 0.264802+0.964303 i & -1.43838 & 0.258749 \\
 1.30664+0.0498758 i & -0.392004-0.724199 i & -0.717017+0.0236658 i \\
 -0.764207+0.0291705 i & -0.392004-0.724199 i & 5.8872+0.943648 i \\
 0.764207+0.0291705 i & -0.392004+0.724199 i & -0.717017-0.0236658 i \\
 -0.615146 & -0.135036 & 1.60981 \\
 0.615146 & -0.135036 & 94.3908 \\
\end{array}
\]
Using the resultant, one finds a polynomial $f(x)^2g(x)$ which is zero at these exact values of the Reidemeister torsion, where 
\begin{align*}
f(x)&= x^{12}-124 x^{11}+3142 x^{10}-34792 x^9+196796 x^8-561760 x^7+627280 x^6 \\
& \quad +254848 x^5-866240 x^4+153088 x^3+253696 x^2-66560 x+1024
\end{align*}
and $g(x)$ is a certain polynomial.
We can confirm that $g(x)\neq 0$ for the above 12 values $\tau_\rho(S^3_{2/3}(4_1))$.
Thus, they must be the zeros of $f$, and hence they are algebraic integers.

Since $f(94)<0<f(95)$, the zero of $f$ approximated by $94.3908$ must be a real number.
This value is larger than the absolute values of the other 11 zeros.
Hence, this zero is a Perron number.
Here the corresponding representation is an $\SL(2,\R)$-representation.
Indeed, $s$'s and $t$'s are respectively zeros of the polynomials
\begin{align*}
 & s^{24}-3 s^{22}-3 s^{20}+8 s^{18}+12 s^{16}-7 s^{14}-20 s^{12}-7 s^{10}+12 s^8+8 s^6-3 s^4-3 s^2+1, \\
 & t^6+8 t^5+23 t^4+31 t^3+23 t^2+10 t+1.
\end{align*}
They have zeros between $0.6$ and $0.7$ and between $-0.2$ and $-0.1$, respectively.
Therefore, one of the $\SL(2,\R)$-representations gives a Perron number.
Such a phenomenon is numerically seen in \cite{Kit16N} as well.
Also, the Reidemeister torsions of the Brieskorn homology 3-sphere $\Sigma(p,q,r)$ are real numbers (see Section~\ref{sec:Seifert}).
Thus, the maximal value $(4\sin\frac{\pi}{2p}\sin\frac{\pi}{2q}\sin\frac{\pi}{2r})^{-2}$ is a Perron number.
Note that $\pi_1(\Sigma(2,3,5))$ does not admit any $\SL(2,\R)$-representation and the maximal Reidemeister torsion comes from an $\SU(2)$-representation (see \cite[Remark~3.4]{KiYa16}).
\end{example}

\begin{example}
\label{ex:5_2}
There are 17 representations of $\pi_1(S^3_{1/2}(5_2))$ and three of them are not acyclic:
\[
\begin{array}{l | l | l}
 s & t & \tau_\rho(S^3_{1/2}(5_2)) \\ \hline
 -0.471842+0.881683 i & -2.87048 & 2.81243 \\
 0.165381+0.98623 i & -3.68825 & 12.3508 \\
 -0.200082+0.979779 i & -2.2146 & 0.490587 \\
 0.0681942+0.997672 i & -2.41933 & 0.313753 \\
 1.29286+1.35876 i & -0.182462-0.461334 i & 9.57556-0.520417 i \\
 -1.26355+0.363134 i & 0.158863+1.07159 i & 3.6856-0.147423 i \\
 -0.348139+0.937443 i & -1.1692 & 4.5522 \\
 1 & 0.21508-1.30714 i & 0 \\
 1 & 0.21508+1.30714 i & 0 \\
 1 & 0.56984 & 0 \\
 -0.859034+0.511919 i & -0.62449 & 7.42456 \\
 0.313791+0.949492 i & -1.80681 & 0.0670363 \\
 0.942666+0.333737 i & 0.0622582 & 148.658 \\
 0.709501+0.704704 i & -1.66753 & 1.06479 \\
 -0.731039+0.210094 i & 0.158863-1.07159 i & 3.6856+0.147423 i \\
 0.367529+0.386263 i & -0.182462+0.461334 i & 9.57556+0.520417 i \\
 -0.986232+0.16537 i & 0.445642 & 5.74328 \\
\end{array}
\]
In the same manner as Example~\ref{ex:2/3}, we can show that 14 values of $\tau_\rho(S^3_{1/2}(5_2))$ are the zeros of the monic polynomial
\begin{align*}
& x^{14}-210 x^{13}+10760 x^{12}-269160 x^{11}+3993232 x^{10} \\
& -38203808 x^9+245006784 x^8 -1067441024 x^7+3141232640 x^6-6091473408 x^5 \\
& +7422475264 x^4-5260713984 x^3+1942106112 x^2-314212352 x+13778944.
\end{align*}
Hence they are algebraic integers.

As similar to the previous example, the zero of the above polynomial approximated by $148.658$ is a Perron number.
The corresponding representation is numerically an $\SL(2,\R)$-representation by considering the conjugate $P^{-1}\rho(\textendash)P$, where
$P\approx
\begin{pmatrix}
 -0.950489 i & -1.97479 \\
 0.341848 & 0.341848 i \\
\end{pmatrix}
$.
\end{example}

\section{Reidemeister torsion of Seifert fibered spaces}
\label{sec:Seifert}
In this section, we show that the $\SL(2,\C)$-Reidemeister torsion $\tau_\rho(M)$ is an algebraic integer for a Seifert fibered space $M$ under a mild condition, which any Brieskorn homology 3-sphere satisfies.

\subsection{Chebyshev polynomials and variants}
We use Chebyshev polynomials and variants to prove the algebraic integrality. 
The next lemma follows from the fact that $\cos\frac{2k-1}{2n}\pi = \sin\frac{n-2k+1}{2n}\pi$ ($k=1,2,\dots,n$) are the roots of the Chebyshev polynomial of the first kind $T_n(x)$ defined by $T_n(\cos\theta)=\cos n\theta$.
Note that $T_{2n}(x)$ has the form of $2^{2n-1}x^{2n}+\dots+(-1)^n \in \Z[x]$.

\begin{lemma}
\label{lem:sin}
Let $a$ be a positive even integer.
Then $(\sin\frac{k}{2a}\pi)^{-1}$ is an algebraic integer for $k=-a+1,-a+3,\dots,a-1$.
\end{lemma}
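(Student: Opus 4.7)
The plan is to identify $(\sin\frac{k\pi}{2a})^{-1}$ as a root of an explicit monic polynomial in $\Z[x]$, using the hint about $T_a(x)$.

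First I would rewrite the sines as cosines: since $a$ is even and $k$ is odd, $a - k$ is odd, so writing $a - k = 2j - 1$ (with $j = (a-k+1)/2$) yields
\[
\sin\tfrac{k\pi}{2a} = \cos\tfrac{(a-k)\pi}{2a} = \cos\tfrac{(2j-1)\pi}{2a}.
\]
As $k$ ranges over the odd integers in $\{-a+1, -a+3, \dots, a-1\}$, the index $j$ ranges over $\{1, 2, \dots, a\}$. By the hint, these are exactly the $a$ roots of the Chebyshev polynomial $T_a(x)$.

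Next I would invoke the explicit form of $T_a$ for even $a$: writing $a = 2n$, we have $T_a(x) = 2^{a-1}x^a + \cdots + (-1)^{a/2} \in \Z[x]$, so the constant term is $\pm 1$. Reversing the coefficients gives
\[
x^a T_a(1/x) = (-1)^{a/2} x^a + \cdots + 2^{a-1} \in \Z[x],
\]
which has leading coefficient $\pm 1$. After multiplying by $\pm 1$, this is a monic integer polynomial, and its roots are precisely the reciprocals $(\sin\frac{k\pi}{2a})^{-1}$ of the roots of $T_a$. Hence each $(\sin\frac{k\pi}{2a})^{-1}$ is an algebraic integer.

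There is essentially no obstacle: the lemma is an immediate consequence of the two facts explicitly recalled by the authors (the roots of $T_a$ and the shape of its coefficients), combined with the classical reciprocal-polynomial trick that works precisely because the constant term of $T_a$ is a unit when $a$ is even. The one small subtlety to verify is the bijection between the index set $\{k : k \text{ odd}, |k| \leq a - 1\}$ and $\{1, \dots, a\}$ via $j = (a-k+1)/2$, which is a direct parity check.
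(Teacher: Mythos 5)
Your proof is correct and follows essentially the same route as the paper: identify $\sin\frac{k\pi}{2a}=\cos\frac{(a-k)\pi}{2a}$ as a root of $T_a(x)$, then observe that the reversed polynomial $x^aT_a(1/x)=\pm x^a+\dots+2^{a-1}$ is (up to sign) monic over $\Z$ and has the reciprocals as roots. The index bookkeeping you carry out is the only detail the paper leaves implicit.
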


\begin{proof}
As mentioned above, $\sin\frac{k}{2a}\pi$ gives a zero of $T_a(x)$. 
Then $(\sin\frac{k}{2a}\pi)^{-1}$ is a zero of $x^{a}T_a(1/x)=\pm x^{a}+\dots+2^{a-1}$. 
\end{proof}

Here let us introduce a variant of the normalized Chebyshev polynomial of the second kind.

\begin{definition}
Define a polynomial $V_n(x) \in \Z[x]$ inductively by $V_0(x)=1$, $V_1(x)=x-1$, and $V_n(x)=xV_{n-1}(x)-V_{n-2}(x)$ for $n\geq 2$.
\end{definition}

By definition, $V_n(x)$ is of the form $x^n+\dots\pm 1$.

\begin{lemma}
\label{lem:cos}
Suppose $\theta\neq (2k-1)\pi$ for any $k \in\Z$.
Then $V_n(2\cos\theta)=\cos(n+\frac{1}{2})\theta/\cos\frac{\theta}{2}$.
\end{lemma}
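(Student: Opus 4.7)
The plan is a straightforward induction on $n$, with the product-to-sum identity $2\cos A\cos B = \cos(A+B)+\cos(A-B)$ doing the main work.

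First I would dispose of the two base cases. For $n=0$, both sides equal $1$. For $n=1$, the left side is $2\cos\theta - 1$, while the right side $\cos(\tfrac{3}{2}\theta)/\cos\tfrac{\theta}{2}$ can be expanded by writing $\cos(\tfrac{3}{2}\theta) = \cos(\theta + \tfrac{\theta}{2}) = \cos\theta\cos\tfrac{\theta}{2} - \sin\theta\sin\tfrac{\theta}{2}$ and then using $\sin\theta = 2\sin\tfrac{\theta}{2}\cos\tfrac{\theta}{2}$ together with $2\sin^2\tfrac{\theta}{2} = 1-\cos\theta$; after dividing by $\cos\tfrac{\theta}{2}$ (which is nonzero by the hypothesis on $\theta$), one obtains exactly $2\cos\theta-1$.

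For the inductive step, assume the formula holds for $n-1$ and $n-2$. Using the recurrence $V_n(x) = xV_{n-1}(x) - V_{n-2}(x)$ with $x = 2\cos\theta$, I write
\[
V_n(2\cos\theta) = \frac{2\cos\theta\cdot\cos(n-\tfrac{1}{2})\theta - \cos(n-\tfrac{3}{2})\theta}{\cos\tfrac{\theta}{2}}.
\]
Applying the product-to-sum identity with $A=\theta$ and $B=(n-\tfrac{1}{2})\theta$ converts the first term in the numerator into $\cos(n+\tfrac{1}{2})\theta + \cos(n-\tfrac{3}{2})\theta$, and the second term cancels, leaving $\cos(n+\tfrac{1}{2})\theta/\cos\tfrac{\theta}{2}$ as required.

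There is no real obstacle here; the only mild subtlety is the assumption $\theta \neq (2k-1)\pi$, which is precisely what guarantees $\cos\tfrac{\theta}{2}\neq 0$ so that the division is legitimate throughout the argument. The whole proof fits in a few lines once the base cases are laid out.
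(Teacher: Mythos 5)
Your proof is correct and follows essentially the same route as the paper: induction on $n$ with base cases $n=0,1$ verified via the half-angle identities, and the inductive step reduced to the identity $2\cos\theta\cos(n-\tfrac{1}{2})\theta-\cos(n-\tfrac{3}{2})\theta=\cos(n+\tfrac{1}{2})\theta$ (the paper phrases this via the angle-subtraction formula rather than product-to-sum, which is the same computation). No gaps.
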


\begin{proof}
Use induction on $n$.
The cases $n=0,1$ follow from the equalities $\cos\frac{3}{2}\theta = \cos\theta \cos\frac{1}{2}\theta -\sin\theta \sin\frac{1}{2}\theta$ and $\sin\theta = 2\sin\frac{\theta}{2}\cos\frac{\theta}{2}$.
Suppose $n\geq 2$.
Then we have
\begin{align*}
 V_n(2\cos\theta)\cos\frac{\theta}{2} &= (2\cos\theta V_{n-1}(2\cos\theta)-V_{n-2}(2\cos\theta))\cos\frac{\theta}{2} \\
 &= 2\cos\theta \cos\left(n-\frac{1}{2}\right)\theta -\cos\left(n-\frac{3}{2}\right)\theta \\
 &= \cos\left(n-\frac{1}{2}\right)\theta \cos\theta -\sin\left(n-\frac{1}{2}\right)\theta \sin\theta \\
 &= \cos\left(n+\frac{1}{2}\right)\theta,
\end{align*}
where the second equality follows from the induction hypothesis.
\end{proof}

Therefore, the roots of $V_n(x)$ are $2\cos\frac{2k-1}{2n+1}\pi = 2\sin\frac{2n+3-4k}{2(2n+1)}\pi$ ($k=1,2,\dots,n$).
Since $V_n(0)=\pm 1$, we obtain the next consequence.

\begin{corollary}
\label{cor:2sin}
Let $a$ be a positive odd integer.
Then $(2\sin\frac{a-2k}{2a}\pi)^{\pm 1}$ is an algebraic integer for $k=-a+4,-a+8,\dots,a-2$.
\end{corollary}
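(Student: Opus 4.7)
The plan is to read off both $2\sin\frac{a-2k}{2a}\pi$ and its reciprocal as roots of explicit monic polynomials in $\Z[x]$, using $V_n(x)$ directly. The key observation is that $V_n$ is monic of degree $n$ with constant term $\pm 1$, which simultaneously gives algebraic integrality of its roots and of their reciprocals.

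First I would pin down the constant term. Setting $x = 0$ in the recurrence gives $V_n(0) = -V_{n-2}(0)$, and the initial values $V_0(0)=1$, $V_1(0)=-1$ force $V_n(0) \in \{\pm 1\}$ by induction. The recurrence also shows $V_n(x) \in \Z[x]$ is monic. Hence every root $\alpha$ of $V_n$ lies in $\A$, and since the constant term is a unit, the reversed polynomial $\pm x^n V_n(1/x) \in \Z[x]$ is also monic, yielding $\alpha^{-1} \in \A$ as well. Second, I would locate the roots using Lemma~\ref{lem:cos}: with $a = 2n+1$, choosing $\theta = \frac{(2j-1)\pi}{a}$ makes $\cos(n+\tfrac{1}{2})\theta = \cos\frac{(2j-1)\pi}{2} = 0$ while $\cos\frac{\theta}{2} \neq 0$, so $V_n(2\cos\theta) = 0$. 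This produces $n$ distinct values $2\cos\frac{(2j-1)\pi}{a}$ for $j = 1, \ldots, n$, which therefore exhaust the roots of the degree-$n$ polynomial $V_n$.

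Third, I would match the indexing by converting cosines to sines via $2\cos\frac{k\pi}{a} = 2\sin\frac{a-2k}{2a}\pi$. The arithmetic progression $k = -a+4, -a+8, \ldots, a-2$ of common difference $4$ contributes exactly $(a-1)/2 = n$ values, and one checks directly that $\{k \bmod 2a\}$ for these $k$ yields the odd residues $\{1,3,\dots,a-2\}$ up to sign; by the even symmetry of cosine, $\{2\cos\frac{k\pi}{a}\}$ then coincides with $\{2\cos\frac{(2j-1)\pi}{a} : j = 1,\ldots,n\}$, the full root set of $V_n$. Combined with the first step, each value $2\sin\frac{a-2k}{2a}\pi$ and its reciprocal lie in $\A$.

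There is no serious obstacle; the only point requiring care is the bookkeeping that matches the step-$4$ indexing of the statement to the index set $\{1,3,\ldots,a-2\}$ coming from the roots of $V_n$, which is a short direct check (and can be verified on small cases $a = 3,5,7$ for sanity). Everything else is an immediate consequence of the recurrence and Lemma~\ref{lem:cos}.
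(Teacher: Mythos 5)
Your proof is correct and takes essentially the same route as the paper: use Lemma~\ref{lem:cos} to identify the roots of $V_n$ as $2\cos\frac{(2j-1)\pi}{a}$ with $a=2n+1$, observe that $V_n$ is monic over $\Z$ with constant term $\pm 1$ (so roots and their reciprocals are algebraic integers), and match the indexing via $2\cos\frac{k\pi}{a}=2\sin\frac{a-2k}{2a}\pi$. The paper leaves the induction for $V_n(0)=\pm 1$, the reversed-polynomial step, and the index bookkeeping implicit; you have simply made them explicit.
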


\subsection{Seifert fibered spaces}

Let $M$ be an orientable Seifert fibered space with Seifert index  
\[
\{b,(\epsilon=o, g), (a_1,b_1),\dots,(a_m,b_m)\}, 
\]
whose base orbifold is a closed oriented surface with $m$ singular points. 
Here $b$ is an integer, $g$ is a non-negative integer and $a_i$, $b_i$ are coprime integers.

Now any non-trivial value of the Reidemeister torsion $\tau_\rho(M)$ for an irreducible $\SL(2,\C)$-representation $\rho\colon \pi_1(M) \to \SL(2,\C)$ is given as follows.
See \cite[Main Theorem]{Kit94S} and Remark~\ref{rem:convention}.

\begin{proposition}
\label{lem:tau_Seifert}
\[
\tau_\rho(M)^{-1} =
2^{4-m-g}\prod_{i=1}^m
\left(1-(-1)^{s_i}\cos\frac{r_i k_i\pi}{a_i}\right),
\]
where 
\begin{enumerate}
\item
$r_i,s_i\in\Z$ such that $a_is_i-b_ir_i=-1$ $(i=1,\dots,m)$,
\item
$k_i\in\Z$ such that $0\leq k_i\leq a_i$ and $k_i\equiv b_i\bmod 2$ $(i=1,\dots,m)$.
\end{enumerate}
\end{proposition}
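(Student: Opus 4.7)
The plan is to deduce the formula from the Main Theorem of \cite{Kit94S} after adjusting the sign convention noted in Remark~\ref{rem:convention}. I would first fix the standard Seifert presentation of $\pi_1(M)$ with surface generators $x_1, y_1, \dots, x_g, y_g$, exceptional-fiber generators $q_1, \dots, q_m$, and a central element $h$ representing the regular fiber, subject to the relations $q_i^{a_i} h^{b_i} = 1$ for $i = 1, \dots, m$ together with the global relation $[x_1, y_1]\cdots[x_g, y_g]\, q_1\cdots q_m = h^b$. Because $\rho$ is irreducible and $h$ is central, Schur's lemma forces $\rho(h) = \varepsilon I_2$ with $\varepsilon = \pm 1$, and then $q_i^{a_i} h^{b_i} = 1$ pins the eigenvalues of $\rho(q_i)$ to a pair $\exp(\pm \pi i k_i/a_i)$ for an integer $k_i$ with $k_i \equiv b_i \pmod 2$, which is condition~(2).

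Next I would exploit multiplicativity of Reidemeister torsion along the canonical decomposition of $M$ into a principal $S^1$-bundle $M_0$ over a surface-with-boundary and $m$ fibered solid tori $V_i$ glued along peripheral tori $T_i^2$, giving
\[
\tau_\rho(M) = \tau_\rho(M_0) \prod_{i=1}^m \frac{\tau_\rho(V_i)}{\tau_\rho(T_i^2)}.
\]
A direct computation with Fox calculus on the surface relation, using $\rho(h) = \varepsilon I_2$ and the irreducibility of $\rho$, contributes the prefactor $2^{4-m-g}$ (the exponent $4-m-g$ arises from combining the $g$ commutators with the Euler-characteristic count of cells after collapsing boundary tori). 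For each local factor, the meridian of $V_i$ sits on $T_i^2$ in the peripheral basis $(q_i, h)$ as the curve $q_i^{a_i} h^{b_i}$, while the choice of companion basis element is dictated precisely by a pair $(r_i, s_i)$ satisfying $a_i s_i - b_i r_i = -1$, which is condition~(1).

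Finally, I would compute each $V_i$-contribution: substituting the eigenvalues of $\rho(q_i)$ together with $\rho(h) = \varepsilon I_2$ and expanding the relevant $2\times 2$ determinant reduces each factor to $1 - (-1)^{s_i}\cos(r_i k_i\pi/a_i)$, the sign $(-1)^{s_i}$ emerging from the substitution $\varepsilon = (-1)^{b_i - k_i}$ dictated by the parity compatibility. Taking reciprocals as required by Remark~\ref{rem:convention} then yields the displayed formula. The principal obstacle is the bookkeeping of peripheral bases, namely checking that the auxiliary pair $(r_i, s_i)$ produces exactly the stated sign and cosine argument rather than one of several equivalent presentations; since this bookkeeping is the content of \cite[Main Theorem]{Kit94S}, the cleanest route in practice is to quote that result and simply invert it to account for the opposite torsion convention.
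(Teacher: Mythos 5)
Your proposal is correct and, in its conclusion, coincides with what the paper actually does: the paper offers no proof of this proposition beyond citing the Main Theorem of \cite{Kit94S} and inverting the formula per the convention of Remark~\ref{rem:convention}. Your preliminary sketch of the underlying argument (Seifert presentation, $\rho(h)=\pm I_2$ by Schur's lemma, multiplicativity over the decomposition into the circle bundle $M_0$ and the fibered solid tori) is a fair outline of how the cited theorem is established, with the one caveat that the parity condition $k_i\equiv b_i\bmod 2$ as you derive it requires $\rho(h)=-I_2$, which is exactly where the acyclicity (non-triviality of the torsion) enters.
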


\begin{remark}
A pair $(r_i, s_i )$ is not unique, 
but it depends on only $(a_i,b_i)$.
On the other hand $k_i$  does on a representation $\rho$. 
\end{remark}

We can see the following by Lemmas~\ref{lem:sin} and \ref{lem:cos}.
We write $m_o$ for the number of odd $a_i$'s.

\begin{proposition}
\label{prop:Seifert}
For a Seifert fibered space $M$ with an irreducible representation $\rho$,  
if $2m_o+g\geq 4$, then 
the Reidemeister torsion $\tau_\rho(M)$ is an algebraic integer.
\end{proposition}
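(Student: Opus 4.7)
My plan is to substitute the explicit formula from Proposition~\ref{lem:tau_Seifert}, convert each factor to a $\sin^{-2}$ to which Lemma~\ref{lem:sin} or Corollary~\ref{cor:2sin} applies, and then track the powers of $2$.

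First, I would use the half-angle identities $1-\cos\theta = 2\sin^2(\theta/2)$ and $1+\cos\theta = 2\sin^2((\pi-\theta)/2)$ to rewrite
\[
1 - (-1)^{s_i}\cos\tfrac{r_i k_i\pi}{a_i} = 2\sin^2\tfrac{m_i\pi}{2a_i},
\]
with $m_i = r_ik_i$ if $s_i$ is even and $m_i = a_i - r_ik_i$ if $s_i$ is odd (reduced modulo $2a_i$ using $\sin(\pi-x) = \sin x$). The resulting $2^m$ cancels the $2^{-m}$ in $2^{4-m-g}$, giving
\[
\tau_\rho(M) = 2^{g-4}\prod_{i=1}^{m}\Bigl(\sin\tfrac{m_i\pi}{2a_i}\Bigr)^{-2}.
\]
A short parity analysis, splitting into the four cases determined by the parities of $a_i$ and $s_i$ and using $a_is_i - b_ir_i = -1$ together with $k_i \equiv b_i \pmod 2$, shows that $m_i$ is odd in every case.

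With $m_i$ odd, Lemma~\ref{lem:sin} applied to each even $a_i$ gives $\sin^{-2}(m_i\pi/(2a_i)) \in \A$, while Corollary~\ref{cor:2sin} applied to each odd $a_i$ gives that $2\sin(m_i\pi/(2a_i))$ is a unit in $\A$, so
\[
\sin^{-2}\tfrac{m_i\pi}{2a_i} = 4\bigl(2\sin\tfrac{m_i\pi}{2a_i}\bigr)^{-2}
\]
equals $4$ times a unit. Collecting contributions yields $\tau_\rho(M) = 2^{g-4+2m_o}\cdot \alpha$ with $\alpha \in \A$, and the hypothesis $2m_o + g \geq 4$ then guarantees $\tau_\rho(M) \in \A$.

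The main obstacle will be the careful bookkeeping of the powers of $2$, in particular the degenerate boundary case $m_i = a_i$ (possible only for odd $a_i$, where $\sin(m_i\pi/(2a_i)) = 1$ and the anticipated factor of $4$ disappears). I expect this case to be ruled out by the irreducibility of $\rho$ together with the surface relation $[a_1,b_1]\cdots[a_g,b_g]\,q_1\cdots q_m\,h^b = 1$ forcing sufficiently many $\rho(q_i)$ to be non-central; failing that, one must extract the missing $2$-adic content from the even-$a_i$ factors, which live in cyclotomic fields ramified above $2$.
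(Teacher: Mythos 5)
Your proposal is correct and follows essentially the same route as the paper's proof: the paper likewise converts each factor into $2\sin^2\bigl(\tfrac{a_i-r_ik_i}{2a_i}\pi\bigr)$ (it normalizes $(r_i,s_i)\mapsto(r_i+a_i,s_i+b_i)$ so that $s_i$ is odd in all three parity cases, whereas you keep $(r_i,s_i)$ and split on the parity of $s_i$ — an equivalent bookkeeping), arrives at the same expression $\tau_\rho(M)=2^{2m_o+g-4}\prod_{a_i:\,\text{odd}}\bigl(2\sin\tfrac{a_i-r_ik_i}{2a_i}\pi\bigr)^{-2}\prod_{a_i:\,\text{even}}\bigl(\sin\tfrac{a_i-r_ik_i}{2a_i}\pi\bigr)^{-2}$, and concludes via Lemma~\ref{lem:sin} and Corollary~\ref{cor:2sin} exactly as you do. The degenerate possibility $m_i\equiv a_i\pmod{2a_i}$ (i.e.\ $k_i\in\{0,a_i\}$) that you flag is passed over silently in the paper's proof as well, so on that point your write-up is, if anything, slightly more careful than the original.
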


\begin{proof}
Here we deform each factor, 
which contains a cosine function, of the formula in Lemma~\ref{lem:tau_Seifert} to a more simple form. 

\begin{itemize}
\item Case (I): $a_i$ is odd and $b_i$ is even. 

In this case we see $ s_i $ must be odd since $a_i s_i -b_ir_i=-1$.
Now,
\[
\begin{split}
1-(-1)^{ s_i }\cos\frac{r_i k_i\pi}{a_i}
&=1+\cos\frac{r_i k_i\pi}{a_i}\\
&=2\cos^2\left(\frac{r_i k_i\pi}{2a_i}\right)\\
&=2\sin^2\left(\frac{r_{i} {k}_i\pi}{2a_i}+\frac{\pi}{2}\right)\\
&=2\sin^2\left(\frac{2a_i\pi}{2a_i}-\frac{(r_{i}{k}_i+a_i)\pi}{2a_i}\right)\\
&=2\sin^2\left(\frac{a_i-r_i k_i}{2a_i}\pi\right).\\
\end{split}
\]
\item Case (II): $a_i$ is odd and $b_i$ is odd. 

In this case $ s_i $ may be odd and $r_i$ even by changing $ s_i $ to $ s_i +b_i$ and $r_i$ to $r_i+a_i$.
Similarly, we have
\[
1-(-1)^{ s_i }\cos\frac{r_i k_i\pi}{a_i}
=2\sin^2\left(\frac{a_i-r_i k_i}{2a_i}\pi\right).
\]

\item Case (III): $a_i$ is even. 

In this case $b_i$ must be odd and $ s_i $ may be odd by changing $ s_i $ to $ s_i +b_i$. 
Similarly to Cases (I) and (II), 
we see 
\[
1-(-1)^{ s_i }\cos\frac{r_i k_i\pi}{a_i}
=2\sin^2\left(\frac{a_i-r_i k_i}{2a_i}\pi\right).
\]
\end{itemize}

Now we see 
\[
\tau_\rho(M)
=
2^{2m_o+g-4}\prod_{a_i:\,\text{odd}}
\left(2\sin\frac{a_i-r_i k_i}{2a_i}\pi\right)^{-2}
\cdot
\prod_{a_i:\,\text{even}}
\left(\sin\frac{a_i-r_i k_i}{2a_i}\pi\right)^{-2} .
\]
Recall that $m_o$ is the number of odd $a_i$'s.
By Lemma~\ref{lem:sin} and Corollary~\ref{cor:2sin}, it can be seen that all factors
\[
2^{2m_o+g-4},\ 
\left(2\sin\frac{a_i-r_i k_i}{2a_i}\pi\right)^{-2}\ (\text{$a_i$: odd}),\ 
\left(\sin\frac{a_i-r_i k_i}{2a_i}\pi\right)^{-2}\ (\text{$a_i$: even}), 
\] 
are algebraic integers.
This completes the proof.
\end{proof}

Consider a Brieskorn homology 3-sphere $\Sigma(a_1,a_2,a_3)$. 
This is a Seifert fibered space with $m=3,~g=0$ and $m_o\geq 2$. 
Since there exist finitely many conjugacy classes of irreducible representations, 
the torsion polynomial can be defined. 

\begin{corollary}
The torsion polynomial $\sigma_{\Sigma(a_1,a_2,a_3)}(t) \in \Q[t]$ lies in $\Z[t]$.
\end{corollary}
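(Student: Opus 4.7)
The plan is to combine Proposition~\ref{prop:Seifert} with Johnson's rationality statement for $\sigma_M(t)$ and the standard fact that $\A\cap\Q=\Z$. Concretely, I would first verify the arithmetic hypothesis of Proposition~\ref{prop:Seifert}, then use it to conclude that every root of $\sigma_{\Sigma(a_1,a_2,a_3)}(t)$ is an algebraic integer, and finally pass from algebraic integer coefficients in $\Q$ down to integer coefficients.

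First I would check that $\Sigma(a_1,a_2,a_3)$ is a Seifert fibered space with $m=3$ exceptional fibers and base orbifold $S^2$ (so $g=0$). Since $\Sigma(a_1,a_2,a_3)$ is an integral homology sphere, the multiplicities $a_1,a_2,a_3$ are pairwise coprime, so at most one of them can be even. Hence $m_o\geq 2$, and therefore
\[
2m_o+g\geq 4.
\]
This puts us exactly in the regime where Proposition~\ref{prop:Seifert} applies, so $\tau_\rho(\Sigma(a_1,a_2,a_3))\in\A$ for every irreducible $\rho$.

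Next, by Johnson's observation recalled in Section~\ref{sec:Intro}, the torsion polynomial satisfies $\sigma_{\Sigma(a_1,a_2,a_3)}(t)\in\Q[t]$. Its coefficients are the elementary symmetric functions of the finitely many values $\tau_\rho(\Sigma(a_1,a_2,a_3))$, each of which is an algebraic integer by the previous step. Since $\A$ is a ring, these symmetric functions are algebraic integers as well, so every coefficient of $\sigma_{\Sigma(a_1,a_2,a_3)}(t)$ lies in $\A\cap\Q$. Using the standard fact that $\A$ is integrally closed in $\C$ (recorded in Section~\ref{subsec:resultant}), we have $\A\cap\Q=\Z$, and the conclusion $\sigma_{\Sigma(a_1,a_2,a_3)}(t)\in\Z[t]$ follows.

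There is essentially no obstacle: the only thing to be careful about is that irreducibility is required for Proposition~\ref{prop:Seifert} (and for the definition of $\sigma_M$), and that the product defining $\sigma_M$ ranges only over acyclic characters in $X^\irr$, which is a finite set for Brieskorn homology spheres; both facts are already in place in the text. The whole argument is therefore a short assembly of Proposition~\ref{prop:Seifert}, Johnson's rationality, and integral closedness of $\A$.
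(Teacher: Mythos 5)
Your argument is correct and is essentially the paper's intended one: the authors note that $\Sigma(a_1,a_2,a_3)$ has $m=3$, $g=0$, $m_o\geq 2$ (so $2m_o+g\geq 4$), apply Proposition~\ref{prop:Seifert} to get that all torsion values are algebraic integers, and combine this with Johnson's rationality of $\sigma_M(t)$ and $\A\cap\Q=\Z$. The only cosmetic quibble is that $\A\cap\Q=\Z$ follows from $\Z$ being integrally closed in $\Q$ rather than from $\A$ being integrally closed in $\C$, but this does not affect the proof.
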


Recall that we write $T(p,q)$ for the $(p,q)$-torus knot.
By similar arguments and Johnson's computation~\cite{Joh88}, we see the following. 

\begin{proposition}
\label{prop:torus_knot}
For any irreducible representation $\rho$, the Reidemeister torsion $\tau_\rho(E(T_{p,q}))$ is an algebraic integer.
\end{proposition}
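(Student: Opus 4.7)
The plan is to follow the strategy of Proposition~\ref{prop:Seifert} almost verbatim, replacing Proposition~\ref{lem:tau_Seifert} with Johnson's explicit formula for $\tau_\rho(E(T_{p,q}))$ from~\cite{Joh88}. Since $E(T_{p,q})$ is a Seifert fibered space over the disk with two exceptional fibers of multiplicities $p$ and $q$, Johnson expresses the torsion on each irreducible representation as a constant multiple of a product of two elementary factors of the form $1 - (-1)^{s_i}\cos(r_i k_i\pi/a_i)$ with $(a_1,a_2)=(p,q)$, exactly analogous to the $m=2$, $g=0$ specialization of Proposition~\ref{lem:tau_Seifert} (up to a boundary-induced adjustment of the constant prefactor).

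First I would apply the half-angle identity used in Cases~(I)--(III) of the proof of Proposition~\ref{prop:Seifert} to rewrite each factor as $2\sin^2((a_i - r_i k_i)\pi/(2a_i))$. The parity conditions $k_i \equiv b_i \pmod{2}$ on admissible irreducible representations guarantee that the arguments of these sines fall into the ranges covered by Lemma~\ref{lem:sin} (when $a_i$ is even) and Corollary~\ref{cor:2sin} (when $a_i$ is odd).

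Second, I would invoke Lemma~\ref{lem:sin} or Corollary~\ref{cor:2sin} as appropriate to conclude that each of the two $\sin^{\pm 2}$ factors, together with any factor of $2$ absorbed in the odd case, is an algebraic integer. Combining these with Johnson's overall constant and the $2^2$ produced by the two half-angle conversions then writes $\tau_\rho(E(T_{p,q}))$ as a product of algebraic integers, which gives the claim.

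The main (and essentially only) obstacle is bookkeeping the cumulative power of $2$, playing the role of the inequality $2m_o+g\geq 4$ in Proposition~\ref{prop:Seifert}. Since $\gcd(p,q)=1$ forces at most one of $p,q$ to be even, a short case analysis on the parities of $p$ and $q$ shows that the exponent of $2$ remains non-negative in all cases, completing the proof.
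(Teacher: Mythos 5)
Your proposal is correct and follows essentially the same route as the paper, which itself disposes of this proposition with the single remark that it follows ``by similar arguments and Johnson's computation''; you have in fact supplied more detail than the paper does. The only point you should make explicit is the precise constant in Johnson's formula (of the shape $\bigl(2\sin\frac{a\pi}{2p}\sin\frac{b\pi}{2q}\bigr)^{-2}$ up to a power of $2$), since the non-negativity of the exponent of $2$ in your final bookkeeping step hinges on it; with $\gcd(p,q)=1$ the parity case analysis you describe does close the argument.
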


\section{Continuous variation of Reidemeister torsion}
\label{sec:Looper-Long}
In this final section, we prove Theorem~\ref{thm:vary_conti} which asserts that for a certain knot $K_0$ the Reidemeister torsion $\tau_\rho(E(K_0))$ can vary continuously while the restriction $r(\rho)$ is fixed.
We construct the knot $K_0$ in Figure~\ref{fig:K0} following \cite[Section~8]{CoLo96}.

\begin{figure}[h]
 \centering
 \includegraphics[width=0.5\textwidth]{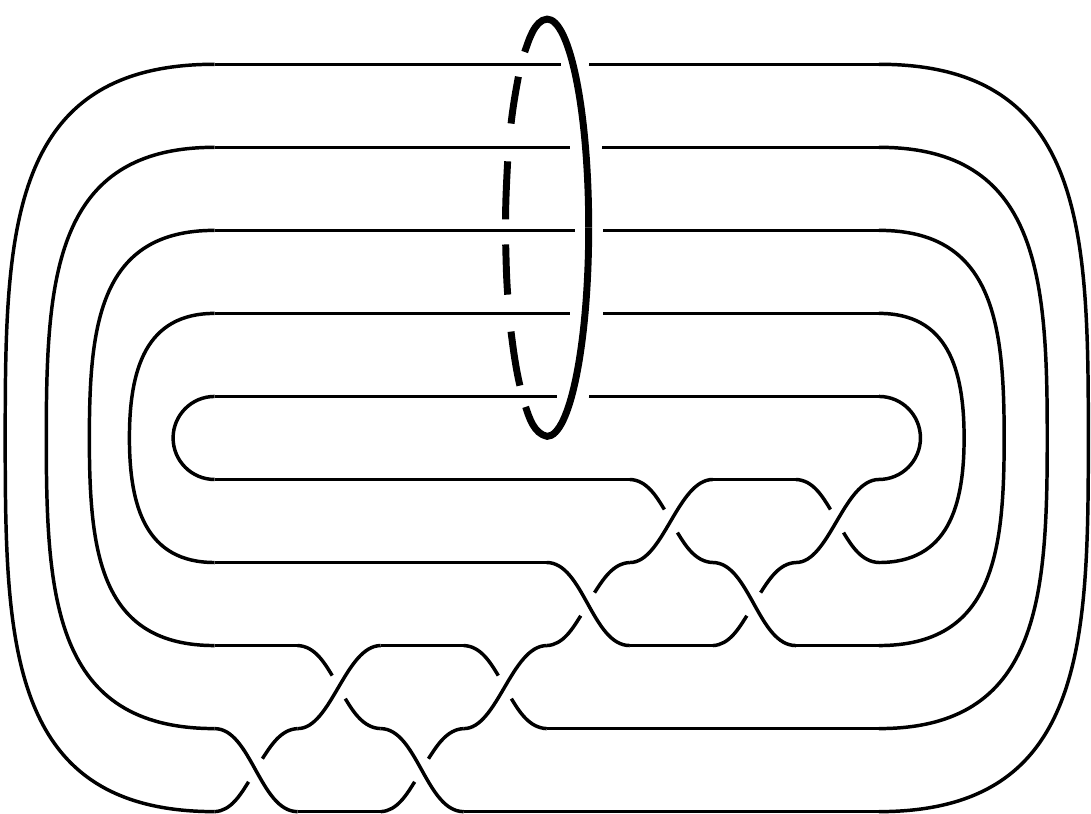}
 \caption{The knot $4_1\sharp 4_1$ with a braid axis.}
 \label{fig:4141}
\end{figure}

\begin{remark}
Since Figure~\ref{fig:K0} is an alternating diagram, $K_0$ is an alternating knot.
Then, we confirm that $K_0$ is prime by \cite[Theorem~4.4]{Lic97}.
The Alexander polynomial $\Delta_{K_0}(t)$ of $K_0$ is computed as follows:
\[
t^{12}-11 t^{11}+55 t^{10}-169 t^9+358 t^8-551 t^7+635 t^6-551 t^5+358 t^4-169 t^3+55 t^2-11 t+1.
\]
In particular, $K_0$ is not a torus knot, and thus it is hyperbolic by \cite[Corollary~2]{Men84}.
Moreover, since $\Delta_{K_0}(t)$ is monic, $K_0$ is a fibered knot of genus $6$ by \cite[Theorem~1.2]{Mur63}.
\end{remark}

Let $\varpi \colon E(K_0) \to E(4_1\sharp 4_1)$ denote a double branched cover whose branch set is the braid axis.
The map $\varpi$ induces a homomorphism $\varpi_\ast\colon \pi_1(E(K_0)) \to \pi_1(E(4_1\sharp 4_1))$ and a regular map $\varpi^\ast\colon X(E(4_1\sharp 4_1)) \to X(E(K_0))$.

Recall that irreducible representations of
\[
\pi_1(E(4_1)) = \ang{x,y \mid [y,x]^{-1}x=y[y,x]^{-1}}
\]
can be given by
\[
\rho(x) =
\begin{pmatrix}
 s & 1 \\
 0 & s^{-1}
\end{pmatrix},\quad
\rho(y) =
\begin{pmatrix}
 s & 0 \\
 -t & s^{-1}
\end{pmatrix},
\]
up to conjugate, where $s,t \in \C^\times$ satisfies $\phi(s,t)=0$.
Suppose $s \neq \pm 1$.
Consider irreducible representations $\rho_{s,t}\ast (P_u^{-1}\rho_{s,t'}P_u)$ of $\pi_1(E(4_1\sharp 4_1))$, where
\[
P_u=
\begin{pmatrix}
 u & (u-u^{-1})/(s-s^{-1}) \\
 0 & u^{-1}
\end{pmatrix}
\in \SL(2,\C)
\]
is an element of the centralizer of $\rho_{s,t}(x)$.
Then we define an irreducible representation $\rho_{s,t,u}$ of $\pi_1(E(K_0))$ by $\rho_{s,t,u}= \varpi^\ast(\rho_{s,t}\ast (P_u^{-1}\rho_{s,t}P_u))$.

\begin{proof}[Proof of Theorem~\ref{thm:vary_conti}]
Define
\[
C=\{\rho_{s,t,u} \mid s,t,u \in \C^\times,\ s\neq\pm 1,\ \phi(s,t)=0\}.
\]
There is a Wirtinger representation of $\pi_1(E(K_0))$ with $16$ generators and $15$ relations.
Then, by Mathematica, we compute the Reidemeister torsion as follows:
\[
\tau_{\rho_{s,t,u}}(E(K_0)) = \frac{f_2(s,t)u^2+f_0(s,t)+f_{-2}(s,t)u^{-2}}{s^{70}(s-1)^{19}(s+1)^{18}},
\]
where $f_j(s,t) \in \Z[s,t]$ ($j=2,0,-2$) are certain complicated polynomials.
Moreover, $\res_t(f_j(s,t), \phi(s,t)) \in \Z[s]$ is the product of six factors $s$, $s\pm 1$, $s^2\pm s-1$, and $g_j(s)$ with some multiplicities, where $g_j(s) \in \Z[s]$ for $j=\pm 2$.

Now, it suffices to show that $f_2(s,t)$ and $f_{-2}(s,t)$ do not vanish simultaneously.
By the definition of $C$, we have $s\neq 0, \pm 1$.
It follows from $t\neq 0$ and $\phi(s,t)=0$ that $s^2\pm s-1 \neq 0$ since $s^2+s^{-2}-3=(s-s^{-1}+1)(s-s^{-1}-1)$.
Finally, by a computer calculation, we can check that $\res_s(g_2(s), g_{-2}(s)) \in\Z$ is non-zero, namely, they have no common zeros.
\end{proof}

\begin{figure}[h]
 \centering
 \includegraphics[width=0.7\textwidth]{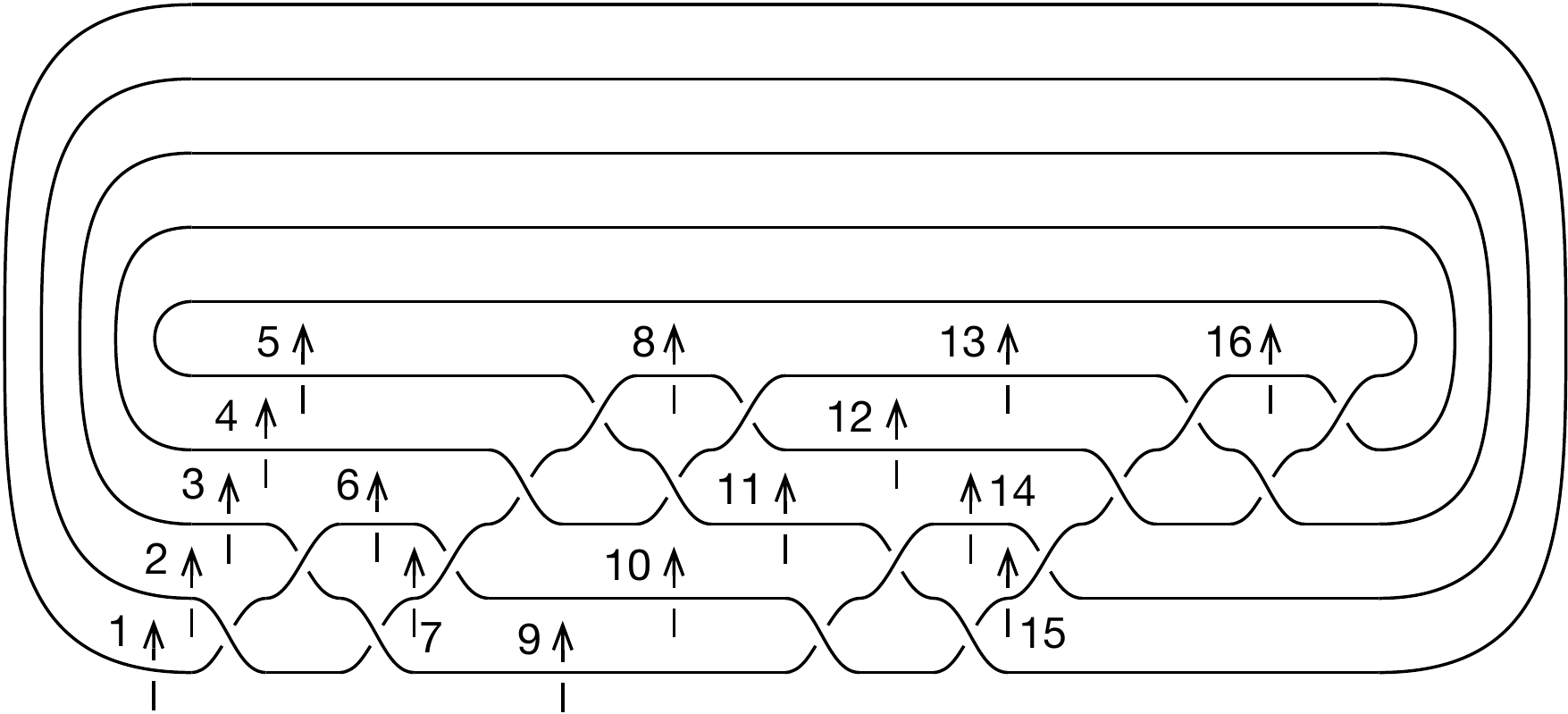}
 \caption{Generators $x_1,x_2,\dots,x_{16}$ of $\pi_1(E(K_0))$.}
 \label{fig:generator}
\end{figure}

\begin{example}
When $(s,t)=(i,\frac{-5+\sqrt{5}}{2})$, the Reidemeister torsion $\tau_{\rho_{s,t,u}}(E(K_0))$ is equal to
\begin{align*}
\frac{1}{16}\left(\left((6765+7695 i)+(6383+2015 i) \sqrt{5}\right) u^2+(73122+21422 \sqrt{5}) \right.\\
\left. +\left((6765-7695 i)+(6383-2015 i) \sqrt{5}\right)u^{-2} \right)
\end{align*}
Also, when $(s,t)=(2,\frac{5-\sqrt{105}}{8})$, $\tau_{\rho_{s,t,u}}(E(K_0))$ is equal to
\begin{align*}
\left((8505120805-233834087 \sqrt{105}) u^2+16 (633637427 \sqrt{105}+6409291542) \right.\\
\left. +(58019711838 \sqrt{105}+594558745850)u^{-2} \right)
/6291456.
\end{align*}
\end{example}

\begin{remark}
Let $s_\pm=\frac{1\pm\sqrt{5}}{2}$.
Then $s_\pm^{\pm 1}$ are the roots of $s^2-s^{-2}-3$ and we have four reducible representations $\rho_{s_\pm^{\pm 1},0,u}$.
First, $\rho_{s_\pm^{-1},0,u}$ is equivalent to $\rho_{s_\pm,0,u}$.
Next, as elements of $X(E(K_0))$, they are independent of $u$ since
\[
P_u^{-1}
\begin{pmatrix}
 s & 0 \\
 0 & s^{-1}
\end{pmatrix}
P_u
=
\begin{pmatrix}
 s & 1-u^{-2} \\
 0 & s^{-1}
\end{pmatrix}.
\]
Moreover, for any $u \in \C^\times$, we have
\[
\tau_{\rho_{s_\pm,0,u}}(E(K_0)) = 256 (8222 \mp 3677 \sqrt{5}).
\]
\end{remark}

Finally, let us prove Corollary~\ref{cor:RT_infinite} stated at the end of Section~\ref{sec:Intro}.

\begin{proof}[Proof of Corollary~\ref{cor:RT_infinite}]
First, the longitude $\lambda$ of $K_0$ is written as
\[
\lambda=x_2 x_7^{-1} x_1 x_{12} x_5^{-1} x_{11} x_{16}^{-1} x_6^{-1} x_{10} x_{15}^{-1} x_9 x_4 x_{13}^{-1} x_3 x_8^{-1} x_{14}^{-1}.
\]
By a computer calculation, we have $\rho_{s,t,u}(\lambda)_{11} = f(s,t)/g(s)$.
Then the resultant $\res_t(g(s)L-f(s,t), \phi(s,t))$ is the product of four factors $s$, $s\pm 1$, and $f_C(L,s)$, where $f_C(L,s)$ is a polynomial written after Corollary~\ref{cor:RT_infinite}.
Since $s\neq 0, \pm 1$, we conclude that $\rho_{s,t,u}(x)_{11}=s$ and $\rho_{s,t,u}(\lambda)_{11}=L$ must satisfy $f_C(L,s)=0$.

Now, we have a 2-bridge knot such that $f_C(L,M)$ and $A_K(M,L)$ have a common zero $(L_0,M_0)$ with $L_0,M_0 \neq 0$ and $\{L_0,M_0\}\not\subset\{1,-1\}$.
It follows from the definition of the $A$-polynomial and \cite[Theorem~3.1]{CoLo96} that there exist $\{\rho_u\}_u \subset C$ corresponding to $(L_0,M_0)$ and $\rho_K \in X(E(K))$ corresponding to $(M_0,L_0)$.
Moreover, $\{L_0,M_0\}\not\subset\{1,-1\}$ implies that $\rho_u$ coincides with $\rho_K$ on the boundary torus, and thus we have a family $\{\rho_u\ast\rho_K\}_u$ of representations of $\pi_1(\Sigma(K_0,K))$.
By the multiplicativity of the Reidemeister torsion and Theorem~\ref{thm:vary_conti}, the set $\RT(\Sigma(K_0,K))$ is an infinite set.
\end{proof}


\end{document}